\newtheorem{theorem}{Theorem}[section]
\newtheorem{proposition}[theorem]{Proposition}
\newtheorem{lemma}[theorem]{Lemma}
\newtheorem{corollary}[theorem]{Corollary}
\newtheorem{remark}{Remark}[section]
\newtheorem{conjecture}[theorem]{Conjecture}
\newcommand{\cS}{\mathcal{S}}
\newcommand{\E}{\mathbb{E}}
\renewcommand{\P}{\mathbb{P}}
\DeclareMathOperator{\var}{var}
\newcommand{\poi}{\mathsf{Poisson}}
\newcommand{\bern}{\mathsf{Bernoulli}}
\newcommand{\bino}{\mathsf{Binomial}}
\title{On a generalisation of the coupon collector problem}
\author[S. Athreya]{Siva Athreya}
\address{
    International Centre for Theoretical Sciences\\
    Survey No. 151, Shivakote, Hesaraghatta Hobli, Bengaluru 560089
    \and
    Stat-Math Unit\\
    Indian Statistical Institute\\
    8th Mile Mysore Road, Bengaluru 560059
}
\email{athreya@icts.res.in}
\author[S. Mukherjee]{Satyaki Mukherjee}
\address{
    Technical University of Munich\\
    Department of Informatics - I7\\
    Boltzmannstr. 3\\
    85748 Garching b. M\"unchen\\
    Germany
}
\email{paglasatyaki@gmail.com}
\author[S. S. Mukherjee]{Soumendu Sundar Mukherjee}
\address{
    Statistics and Mathematics Unit\\
    Indian Statistical Institute\\
    203 B. T.~Road, Kolkata 700108, India
}
\email{ssmukherjee@isical.ac.in}
\begin{document}

\maketitle

\begin{abstract}
We consider a generalisation of the classical coupon collector problem. We define a super-coupon to be any $s$-subset of a universe of $n$ coupons. In each round, a random $r$-subset from the universe is drawn and all its $s$-subsets are marked as collected. We show that the time to collect all super-coupons is $\binom{r}{s}^{-1}\binom{n}{s} \log \binom{n}{s}(1 + o(1))$ on average and has a Gumbel limit after a suitable normalisation. In a similar vein, we show that for any $\alpha \in (0, 1)$, the expected time to collect $(1 - \alpha)$ proportion of all super-coupons is $\binom{r}{s}^{-1}\binom{n}{s} \log \big(\frac{1}{\alpha}\big)(1 + o(1))$. The $r = s$ case of this model is equivalent to the classical coupon collector model.

We also consider a temporally dependent model where the $r$-subsets are drawn according to the following Markovian dynamics: the $r$-subset at round $k + 1$ is formed by replacing a random coupon from the $r$-subset drawn at round $k$ with another random coupon from outside this $r$-subset. We link the time it takes to collect all super-coupons in the $r = s$ case of this model to the cover time of random walk on a certain finite regular graph and conjecture that in general, it takes $\frac{r}{s} \binom{r}{s}^{-1}\binom{n}{s}\log\binom{n}{s}(1 + o(1))$ time on average to collect all super-coupons.
\end{abstract}

\section{Introduction}
In the classical coupon collector problem, at every unit of time a coupon is drawn randomly with replacement from a universe of $n$ distinct coupons. One stops when one has collected all the $n$ coupons. It is well known that $T$, the time it takes to collect all the coupons, is $ n \log n(1 + o(1))$ with high probability.

Various generalisations of the classical coupon collector have been studied in the literature. For instance, \cite{polya1930wahrscheinlichkeitsaufgabe} and \cite{stadje1990collector} allow drawing $q > 1$ coupons without replacement in each round. In \cite{newman1960double}, the authors consider a generalisation where one only stops when one has collected at least $m$ coupons of each type. The papers \cite{papanicolaou1998asymptotics}, \cite{holst2001extreme}, \cite{neal2008generalised} consider a variation where coupons are drawn independently of each other from the same distribution, but one which might not be uniform. Another type of generalisation where a random number of coupons are drawn in each round without replacement is considered in \cite{sellke1995many}, \cite{ivchenko1998many} and \cite{adler2001coupon}. A dependent model is considered in \cite{xu2011generalized}, where the collector draws $q$ coupons in each round, and, among these, chooses to keep the one that he has collected the least number of times so far. The authors show that on average it takes $\frac{n \log n}{q} + \frac{n}{q}(m - 1)\log \log n + O(mn)$ time for the collector to have at least $m$ coupons of each type.

In this article, we consider the following generalisation: We have a universe of $n$ coupons, denoted by $[n] := \{1, \ldots, n\}$. A \emph{super-coupon} is an $s$-subset of $[n]$, where $s$ is a fixed integer between $1$ and $n$. Let $s \le r \le n$ be an integer. In each round, $r$ distinct coupons are drawn at random from the universe. We then say that all $s$-subsets of these $r$ coupons have been collected. Thus in each round, we collect $\binom{r}{s}$ super-coupons. We want to know how long it takes to collect all the $\binom{n}{s}$ super-coupons. Let us denote this time by $T^{(r, s)}$.

Note that when $r = s$, the above problem becomes equivalent to the classical coupon collector problem. It is easy to see that $T^{(r, s)}$ is stochastically smaller than $T^{(s, s)}$, which implies that $T^{(r, s)}$ is $O(\binom{n}{s} \log \binom{n}{s}) = O(n^s \log n^s)$ with high probability. We prove (see Theorem \ref{thm:gccp}) that
\[
    \E T^{(r, s)} = \frac{\binom{n}{s}\log \binom{n}{s} }{\binom{r}{s}}(1 + o(1)) = \frac{1}{(s - 1)! \binom{r}{s}} n^s \log n (1 + o(1)).
\]
The main challenge here is to identify the constant $1/((s - 1)!\binom{r}{s})$ exactly. Our result implies that it takes asymptotically the same amount of time on average to collect all super-coupons in our model as in the simpler model where one draws $\binom{r}{s}$ distinct super-coupons at random in each round (the difference being that these $\binom{r}{s}$ super-coupons do not need to be subsets of the same $r$ coupons). Incidentally, our model encompasses the latter simpler model --- it is equivalent to the $s = 1$ case of our model (which is what \cite{polya1930wahrscheinlichkeitsaufgabe} and \cite{stadje1990collector} studied).

We also consider, for $\alpha \in (0, 1)$, the time $T^{(r, s)}_{\alpha}$ to collect $(1 - \alpha)$ proportion of all coupons. We show (see Theorem~\ref{thm:alpha-gccp}) that
\[
    \E T^{(r, s)}_{\alpha} = \frac{\binom{n}{s}\log \big(\frac{1}{\alpha}\big)}{\binom{r}{s}}(1 + o(1)).
\]

Our model is related to covering designs in combinatorics. An $s$-$(n, r, t)$ design is a collection of $r$-subsets of an $n$-set such that every possible $s$-subset is contained in at least $t$ of the $r$-subsets. In \cite{godbole1996random}, the authors prove that $\binom{n}{s} \log \binom{n}{s} / \binom{r}{s}$ forms a threshold for the probability that a uniformly random collection of $T$ many $r$-subsets forms an $s$-$(n, r, 1)$ design. That is when $T < (1 - o(1)) \binom{n}{s} \log \binom{n}{s} / \binom{r}{s}$, with high probability the collection is not an $s$-$(n, r, 1)$ design, and conversely, when $T > (1 + o(1)) \binom{n}{s} \log \binom{n}{s} / \binom{r}{s}$, with high probability the collection is an $s$-$(n, r, 1)$ design. Our problem is similar but subtly different. Our $T^{(r, s)}$ is a stopping time where the criterion for stopping is the collection becoming an $s$-$(n, r, 1)$ design. The techniques we use for computing the expectation of $T^{(r, s)}$ are based on the first and the second moment methods and are quite different from the techniques used in \cite{godbole1996random} which are based on Janson's correlation inequalities.

Distributional results are also known for a number of coupon collector problems. For example, for the classical coupon collector problem, \cite{erdHos1961classical} showed that the time to collect all coupons, suitably normalised, converges in distribution to a scaled Gumbel random variable. We too obtain via the moment method a Gumbel limit for an appropriately normalised $T^{(r, s)}_{\alpha}$ (see Theorem~\ref{thm:gumbel}). We remark here that in the covering design setting, it was shown in \cite{godbole1996random} that for $T = \binom{n}{s} (\log \binom{n}{s} + c + o(1)) / \binom{r}{s}$, the probability that a uniformly random collection of $T$ many $r$-subsets forms an $s$-$(n, r, 1)$ design, converges to $e^{-e^{-c}}$, the cumulative distribution function of standard Gumbel.

Finally, we consider a variant of our model where a temporal dependence structure is introduced in that the $r$-subsets are now drawn according to a Markovian dynamics: the $r$-subset at round $k + 1$ is formed by replacing a random coupon from the $r$-subset drawn at round $k$ with another random coupon from outside this $r$-subset. Based on our simulations (see Figure~\ref{fig:T-gccp-rw}), we formulate a conjecture that for this model, it takes $\frac{r}{s} \binom{r}{s}^{-1}\binom{n}{s}\log\binom{n}{s}(1 + o(1))$ time on average to collect all super-coupons (see Conjecture~\ref{conj:rw}). The $r = s$ case of this conjecture can be rephrased as a cover time question for an appropriate random walk and this interpretation leads to its solution using the theory of random walks on finite graphs (see Theorem~\ref{thm:rw_r=s}).

\section{Main results}
We now give precise statements of our main results.
\begin{theorem}\label{thm:gccp}
Let $s \le r$ be positive integers. In each round, we draw $r$ coupons at random without replacement from a universe of $n$ coupons. We mark all the $s$-subsets (super-coupons) of any $r$-subset already drawn as collected. Let $T^{(r, s)}$ denote the time it takes to collect all possible $\binom{n}{s}$ super-coupons.
We then have
\[
    \E T^{(r, s)} = \frac{\binom{n}{s} \log \binom{n}{s}}{\binom{r}{s}}(1 + o(1)).
\]
\end{theorem}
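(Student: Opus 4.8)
The plan is to use the classical first- and second-moment strategy for coupon-collector-type problems. Write $N = \binom{n}{s}$ for the number of super-coupons and $\rho = \binom{r}{s}$. The key elementary observation is that for a fixed super-coupon $A$ (an $s$-subset of $[n]$), a uniformly random $r$-subset $R$ satisfies
\[
    \P(A \subseteq R) = \binom{n-s}{r-s}\Big/\binom{n}{r} = \frac{\rho}{N} =: p,
\]
and, since the $r$-subsets across rounds are independent, the first time $\tau_A$ at which $A$ gets collected is geometric with parameter $p$. Thus $T^{(r,s)} = \max_A \tau_A$, a maximum of $N$ identically distributed but dependent geometric random variables, $\E T^{(r,s)} \le \sum_A \E \tau_A = N/p = N^2/\rho < \infty$, and $\E T^{(r,s)} = \sum_{t \ge 0} \P(T^{(r,s)} > t)$.

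For the upper bound I would use the union bound: $\P(T^{(r,s)} > t) \le \sum_A \P(\tau_A > t) = N(1-p)^t \le N e^{-pt}$. Taking $t_0 = (1 + \epsilon_n) p^{-1}\log N$ with $\epsilon_n \to 0$ slowly (e.g.\ $\epsilon_n = 1/\log\log N$), the tail $\sum_{t > t_0} N(1-p)^t \le p^{-1} N^{-\epsilon_n} = o(p^{-1}\log N)$, while the head contributes at most $t_0 = (1 + o(1)) p^{-1}\log N$. Hence $\E T^{(r,s)} \le (1 + o(1))\, \binom{n}{s}\log\binom{n}{s}/\binom{r}{s}$.

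For the lower bound, fix $\epsilon \in (0,1)$, set $t_1 = (1-\epsilon) p^{-1}\log N$, and let $Z$ be the number of super-coupons still uncollected at time $t_1$, so that $\E Z = N(1-p)^{t_1} \asymp N^{\epsilon} \to \infty$. I would apply the second moment method (Paley--Zygmund/Chebyshev) to show $\P(Z > 0) \to 1$, which forces $T^{(r,s)} > t_1$ with high probability and thus $\E T^{(r,s)} \ge t_1(1 - o(1))$; letting $\epsilon \downarrow 0$ then finishes. The crux is to estimate $\E Z^2 = \sum_{A,B} \P(A, B \text{ both uncollected at } t_1)$. Inclusion--exclusion applied to a single round gives $\P(A, B \text{ both uncollected at } t_1) = (1 - 2p + p_{A,B})^{t_1}$, where $p_{A,B} = \P(A \cup B \subseteq R) = \binom{n - |A\cup B|}{r - |A \cup B|}/\binom{n}{r}$, interpreted as $0$ when $|A \cup B| > r$. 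Grouping ordered pairs by $j := s - |A \cap B| \in \{0, 1, \dots, s\}$, so $|A \cup B| = s + j$ and there are $N_j = N\binom{s}{j}\binom{n-s}{j}$ pairs in class $j$, one obtains
\[
    \frac{\E Z^2}{(\E Z)^2} = \frac{1}{N^2}\sum_{j=0}^{s} N_j\left(\frac{1 - 2p + p_j}{1 - 2p + p^2}\right)^{t_1}, \qquad p_j := \binom{n-s-j}{r-s-j}\Big/\binom{n}{r}.
\]
I would then check term by term: the $j = 0$ (diagonal) term equals exactly $1/\E Z \to 0$; for $1 \le j \le s$ one has $(1 - 2p + p_j)/(1 - 2p + p^2) = 1 + O(p_j + p^2)$ with $p_j \asymp n^{-(s+j)}$ and $p^2 \asymp n^{-2s}$, while $t_1 \asymp n^s \log n$, so the exponentiated ratio is $1 + o(1)$; and $N_j/N^2 \asymp n^{j-s}$, which is $o(1)$ for $j < s$ and tends to $1$ for $j = s$. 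Hence the whole sum converges to $1$, i.e.\ $\var(Z) = o((\E Z)^2)$.

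The main obstacle is the second-moment estimate, and within it the delicate point is verifying that $t_1 \log\big((1 - 2p + p_j)/(1 - 2p + p^2)\big) \to 0$ for every $j \ge 1$: this is where the precise orders $p \asymp n^{-s}$, $p_j \asymp n^{-(s+j)}$ and $t_1 \asymp n^s \log n$ must be combined, and it is exactly what pins down the constant $1/((s-1)!\binom{r}{s})$ in the equivalent form of the statement. One also needs to be careful to let $\epsilon$ (and the slack $\epsilon_n$ in the upper bound) tend to $0$ at an admissible rate so that the high-probability statements upgrade to the stated expectation asymptotics, and to treat the degenerate sub-cases $s + j > r$ (where $p_j = 0$ and the ratio is below $1$) separately — routine, but worth noting.
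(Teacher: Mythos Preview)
Your proposal is correct and follows essentially the same strategy as the paper: write $\E T^{(r,s)}=\sum_{k\ge 0}\P(N_k>0)$, use the first-moment (union) bound for the tail beyond $(1+o(1))p^{-1}\log N$, and the second-moment method at a time $(1-o(1))p^{-1}\log N$ to show $\P(N_{k}>0)\to 1$. The only cosmetic differences are that the paper bounds all pairwise terms $\P(E_A\cap E_B)$ uniformly by a single $\psi=O(n^{-(s+2)})$ rather than stratifying by $|A\cap B|$, and it sends the slack $\delta_n\to 0$ at an explicit rate inside one argument instead of fixing $\epsilon$ and letting $\epsilon\downarrow 0$ at the end.
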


It is not hard to prove that there is no real difference between the model where in each round, the $r$ coupons are chosen without replacement and the one where those are chosen with replacement. Note that if $s = 1$ and we were to choose $r$ coupons with replacement, this would be the same as the classical coupon collector problem where one draws one coupon at a time, only with the unit of time scaled by a factor of $r$. Therefore, since the classical coupon collector takes $n \log n (1 + o(1))$ time, in this problem the collector would take $\frac{n \log n}{r} (1 + o(1))$ time. This agrees with our result. Unfortunately, the argument sketched above does not easily generalise to the case $s > 1$.

A natural example of our generalised coupon collector model in the case $r = 3, s = 2$ is as follows. Suppose the coupons are the edges of a complete graph on $n$ vertices. In each round, we choose $3$ edges (i.e. coupons) that form a triangle. In the classical coupon collector problem, we would not have had this restriction that the three edges need to form a triangle, which makes our problem harder to analyse. Nevertheless, our main result shows that the average time taken to see all the edges is the same for both problems, i.e. the dependency introduced by the above restriction does not manifest itself in a first order result such as ours.

Another natural question in the set-up of Theorem \ref{thm:gccp} is how long it takes to collect $(1 - \alpha)$-proportion of all the super-coupons, where $\alpha \in (0, 1)$. In the following theorem, we show that the collector needs $(1 + o(1))\binom{n}{s} \log(\frac{1}{\alpha}) / \binom{r}{s}$ time for this.

\begin{theorem} \label{thm:alpha-gccp} Suppose that $T^{(r, s)}_{\alpha}$ denotes the time it takes to observe at least $(1 - \alpha)\binom{n}{s}$ distinct super-coupons, where $\alpha \in (0, 1)$. Then
\[
    \E T^{(r, s)}_{\alpha} = \frac{\binom{n}{s}\log\big(\frac{1}{\alpha}\big)}{\binom{r}{s}}  (1 + o(1)).
\]
\end{theorem}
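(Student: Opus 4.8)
The plan is to track $U_t$, the number of super-coupons not yet collected after $t$ rounds, by a first- and second-moment argument. Write $N = \binom{n}{s}$ and let $p = \binom{r}{s}/\binom{n}{s}$ be the probability that a fixed super-coupon lies inside the random $r$-subset drawn in a single round; note $p \to 0$. Since the $r$-subsets drawn in distinct rounds are independent, $\E U_t = N(1-p)^t$ exactly, and $T^{(r,s)}_{\alpha} > t$ precisely when $U_t > \lfloor \alpha N \rfloor$. Set $t^* = \binom{n}{s}\log(1/\alpha)/\binom{r}{s} = p^{-1}\log(1/\alpha)$; using $\log(1-p) = -p(1 + O(p))$ one gets $(1-p)^{\beta t^*} = \alpha^{\beta}(1+o(1))$ for any fixed $\beta > 0$, so $\E U_{(1\pm\epsilon)t^*} = \alpha^{1\pm\epsilon} N(1+o(1))$, which (for fixed $\epsilon \in (0,1)$ and $n$ large) is bounded away from $\alpha N$ --- strictly below it for $+\epsilon$, strictly above it for $-\epsilon$ --- while still tending to $\infty$.

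The heart of the matter is a variance bound valid for every $t$. Writing $U_t = \sum_A \mathbf{1}\{A \text{ uncollected after } t\}$ over super-coupons $A$, a direct computation gives $\P(A \text{ and } B \text{ uncollected after } t) = (1 - 2p + q_j)^t$, where $j = |A \cap B|$ and $q_j = \binom{r}{2s-j}/\binom{n}{2s-j}$ (read as $0$ when $r < 2s-j$) is the probability that $A \cup B$ is contained in the random $r$-subset. Combining $|a^t - b^t| \le t\,|a-b|\,\max(a,b)^{t-1}$ with $q_j \le p$ (hence $1 - 2p + q_j \le 1-p$, and likewise $(1-p)^2 \le 1-p$) yields $|\mathrm{Cov}(\mathbf{1}_A, \mathbf{1}_B)| \le t\,|q_j - p^2|\,(1-p)^{t-1}$. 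For each $0 \le j \le s-1$ there are $\Theta(n^{2s-j})$ ordered pairs $(A,B)$ with $|A \cap B| = j$, while $|q_j - p^2| = O(n^{-(2s-j)})$; summing over the constantly many values of $j$ gives $\sum_{A \ne B}|\mathrm{Cov}(\mathbf{1}_A,\mathbf{1}_B)| = O(t(1-p)^{t})$, and together with $\sum_A \var(\mathbf{1}_A) \le N(1-p)^t$ this gives
\[
    \var(U_t) = O\big((N+t)(1-p)^t\big).
\]

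The rest is Chebyshev's inequality. For the lower bound, fix $\epsilon \in (0,1)$ and put $t_- = (1-\epsilon)t^*$: since $\E U_{t_-} = \Theta(N)$ exceeds $\alpha N$ by a constant factor and $\var(U_{t_-}) = O(N) = o((\E U_{t_-})^2)$, Chebyshev gives $\P(T^{(r,s)}_{\alpha} > t_-) = \P(U_{t_-} > \lfloor\alpha N\rfloor) \to 1$, so $\E T^{(r,s)}_{\alpha} \ge t_- \P(T^{(r,s)}_{\alpha} > t_-) = (1-\epsilon)t^*(1-o(1))$. For the upper bound, fix $\epsilon \in (0,1)$ and put $t_+ = (1+\epsilon)t^*$, and write $\E T^{(r,s)}_{\alpha} = \sum_{t \ge 0} \P(T^{(r,s)}_{\alpha} > t) \le t_+ + \sum_{t \ge t_+}\P(U_t > \lfloor\alpha N\rfloor)$. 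For $t \ge t_+$ one has $\E U_t \le N(1-p)^{t_+} \le c\,\alpha N$ for some fixed $c < 1$ and all large $n$, so $\P(U_t > \lfloor\alpha N\rfloor) = O(\var(U_t)/N^2) = O((N+t)(1-p)^t/N^2)$ by Chebyshev; since $(1-p)^{t_+} = \Theta(1)$ and $1/p = \Theta(N)$, the geometric-type sum $\sum_{t \ge t_+}(N+t)(1-p)^t$ is $\Theta(N^2)$, so the tail sum is $O(1) = o(t^*)$. Hence $\E T^{(r,s)}_{\alpha} \le (1+\epsilon)t^*(1+o(1))$, and letting $\epsilon \downarrow 0$ in both bounds finishes the proof (this also shows $T^{(r,s)}_\alpha$ has finite expectation, so the sums are legitimate).

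The main obstacle is the variance estimate: one must show that the \emph{full} covariance sum is negligible compared to $(\E U_t)^2 = \Theta(N^2(1-p)^{2t})$, and uniformly in $t$ --- indeed for all large $t$, not just $t \asymp t^*$, so that the tail of $\sum_t \P(T^{(r,s)}_{\alpha} > t)$ can be summed directly and no separate second-moment bound on $T^{(r,s)}_{\alpha}$ is required. This forces one to track how the overlap probability $q_j$, which can be as large as $\Theta(n^{-(2s-j)})$, balances against the $\Theta(n^{2s-j})$ pairs of that overlap type and against the exponent $t$, which is of order $n^s$; the clean cancellation --- each fixed $j$ contributing only $O(t(1-p)^t)$ --- is precisely where the combinatorial structure of the model enters. (The analogous argument for Theorem~\ref{thm:gccp} is considerably more delicate, since there the relevant count of uncollected super-coupons is $O(1)$ rather than tending to $\infty$, and plain Chebyshev no longer suffices.)
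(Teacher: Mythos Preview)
Your proof is correct and takes a genuinely different route from the paper's. The paper establishes bounds on \emph{all} moments $\E N_k^d$ (Lemma~\ref{lem:mom}), applies Markov's inequality to $N_k^d$ for the upper bound and the Paley--Zygmund inequality to $N_k^d$ for the lower bound, and only obtains the sharp constant after sending the auxiliary parameter $d\to\infty$. You, by contrast, stay entirely at the second-moment level: the key step is your variance bound $\var(U_t)=O\big((N+t)(1-p)^t\big)$, obtained by stratifying the covariance sum over the overlap size $j=|A\cap B|$ and using the elementary inequality $|a^t-b^t|\le t\,|a-b|\max(a,b)^{t-1}$. Because this shows $\var(U_t)=o\big((\E U_t)^2\big)$ throughout the relevant range, Chebyshev alone gives both the concentration at $t_-=(1-\epsilon)t^*$ and the summable tail beyond $t_+=(1+\epsilon)t^*$, with no need for higher moments or a double limit. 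Your argument is more elementary and arguably cleaner for this particular statement; the paper's $d$-th moment machinery, on the other hand, is set up uniformly so that the same Lemma~\ref{lem:mom} also feeds the Gumbel limit (Theorem~\ref{thm:gumbel}) via moment convergence.
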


Thus, similar to the case of collecting all super-coupons, on average it takes asymptotically the same amount of time in our model to collect $(1 - \alpha)$ proportion of all super-coupons as in the model where in each round, $\binom{r}{s}$ super-coupons are drawn without the constraint of them forming an $r$-subset.

In Figure \ref{fig:T-gccp}, we look at how long it takes on average to see all possible pairs of coupons if we were to to sample $r$ coupons at a time. We plot Monte Carlo estimates of $T^{(r, 2)}$ (scaled down by its predicted expected value) versus $n$, fixing $r = 10$, in Figure~\ref{fig:T-gccp}-(a), and versus $r$, fixing $n = 200$, in Figure~\ref{fig:T-gccp}-(b). The error bars correspond to one Monte Carlo standard error. We find excellent agreement between the predicted and observed behaviours.

\begin{figure}[!t]
    \centering
    \begin{tabular}{cc}
        \includegraphics[scale = 0.28]{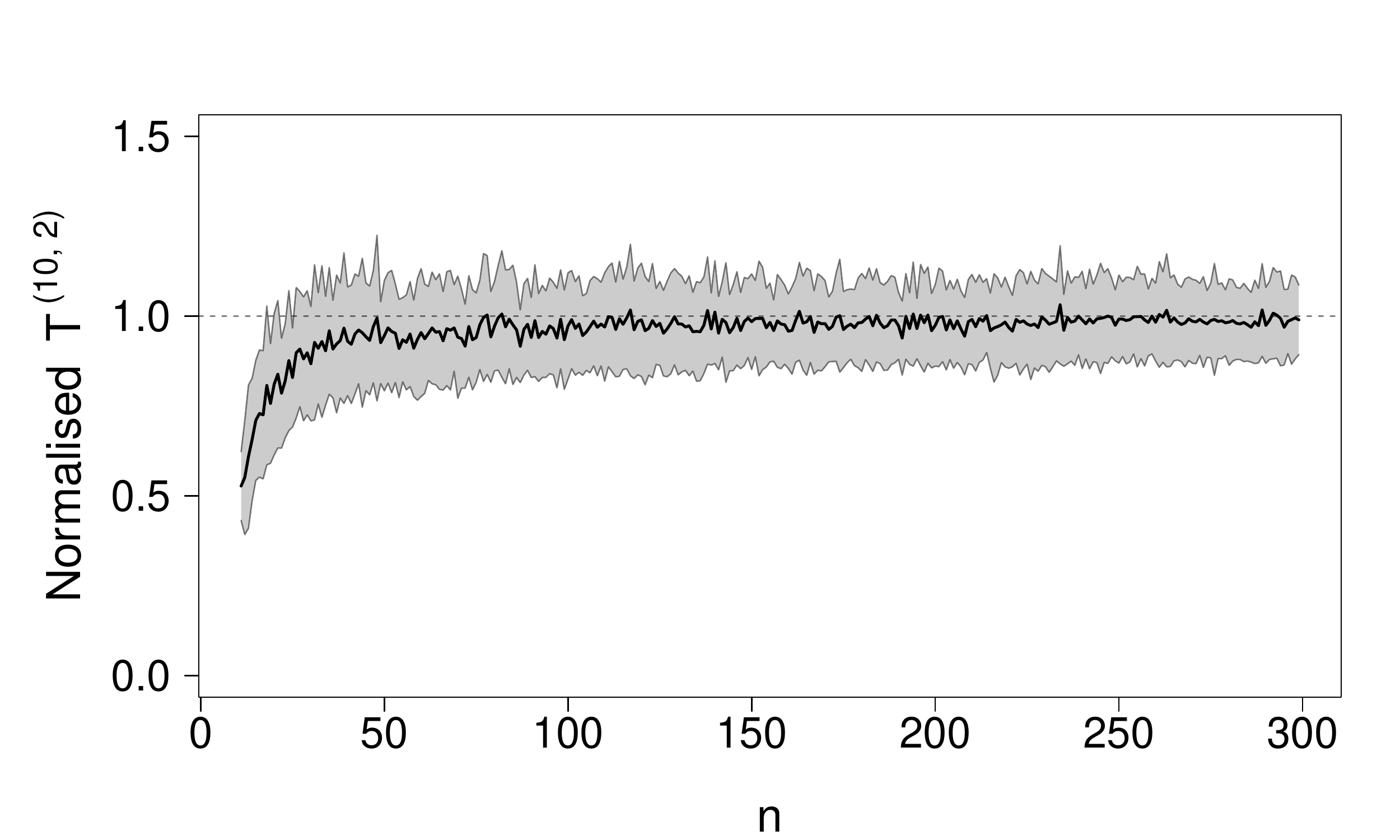} & \includegraphics[scale = 0.28]{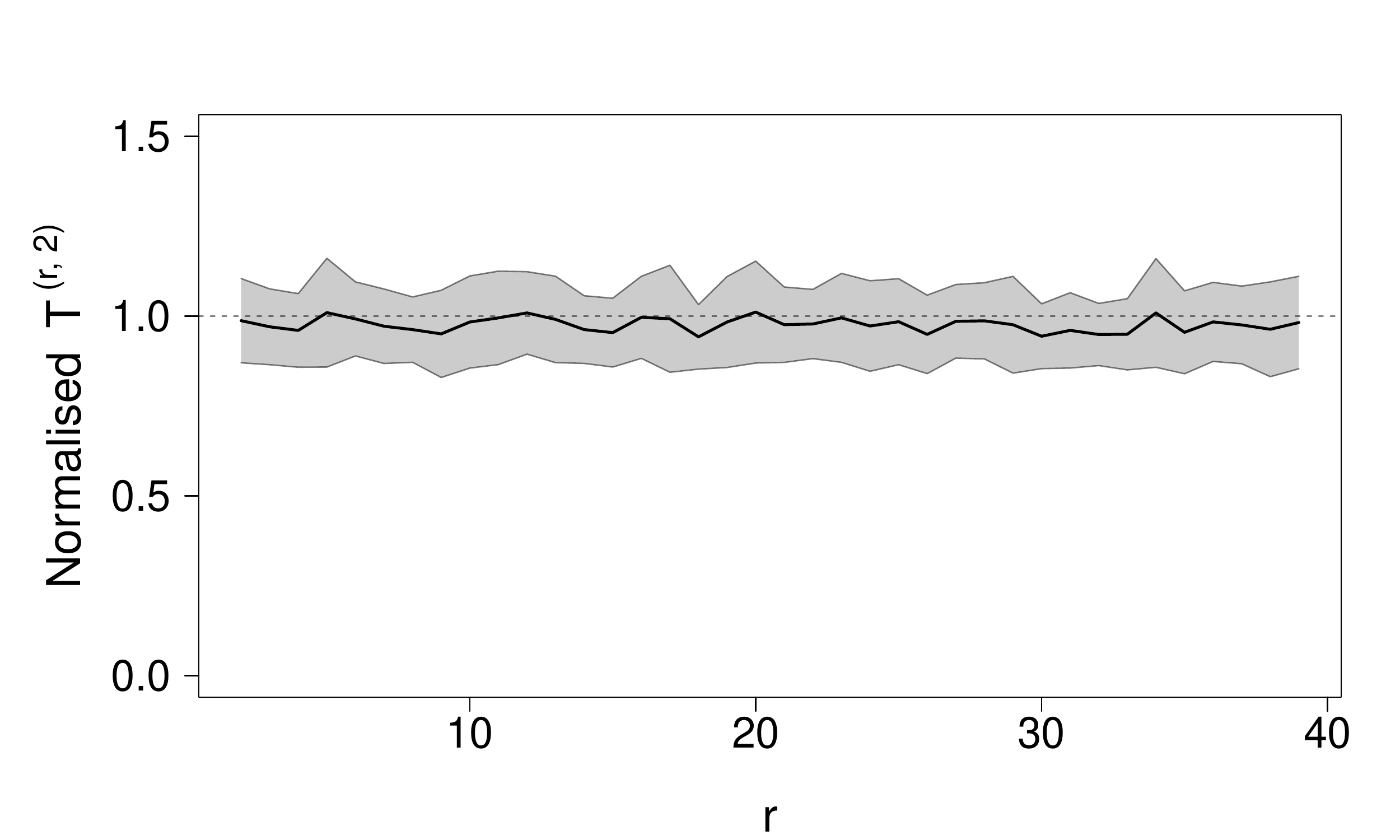} \\
        \quad\,\, (a) & \quad\,\,\,(b)
    \end{tabular}
    \caption{(a) $T^{(10, 2)}$ versus $n$; (b) $T^{(r, 2)}$ versus $r$, with $n = 200$. In both plots, $T^{(r, s)}$ is normalised by $\frac{1}{(s - 1)!\binom{r}{s}} n^s \log n$.}
    \label{fig:T-gccp}
\end{figure}

In a similar vein, Figure \ref{fig:T-alpha-gccp} demonstrates the convergence result of Theorem~\ref{thm:alpha-gccp} via two simulation experiments in the case $s = 2$. Again, we see excellent agreement with the theoretical predictions.

\begin{figure}[!t]
    \centering
    \begin{tabular}{cc}
        \includegraphics[scale = 0.28]{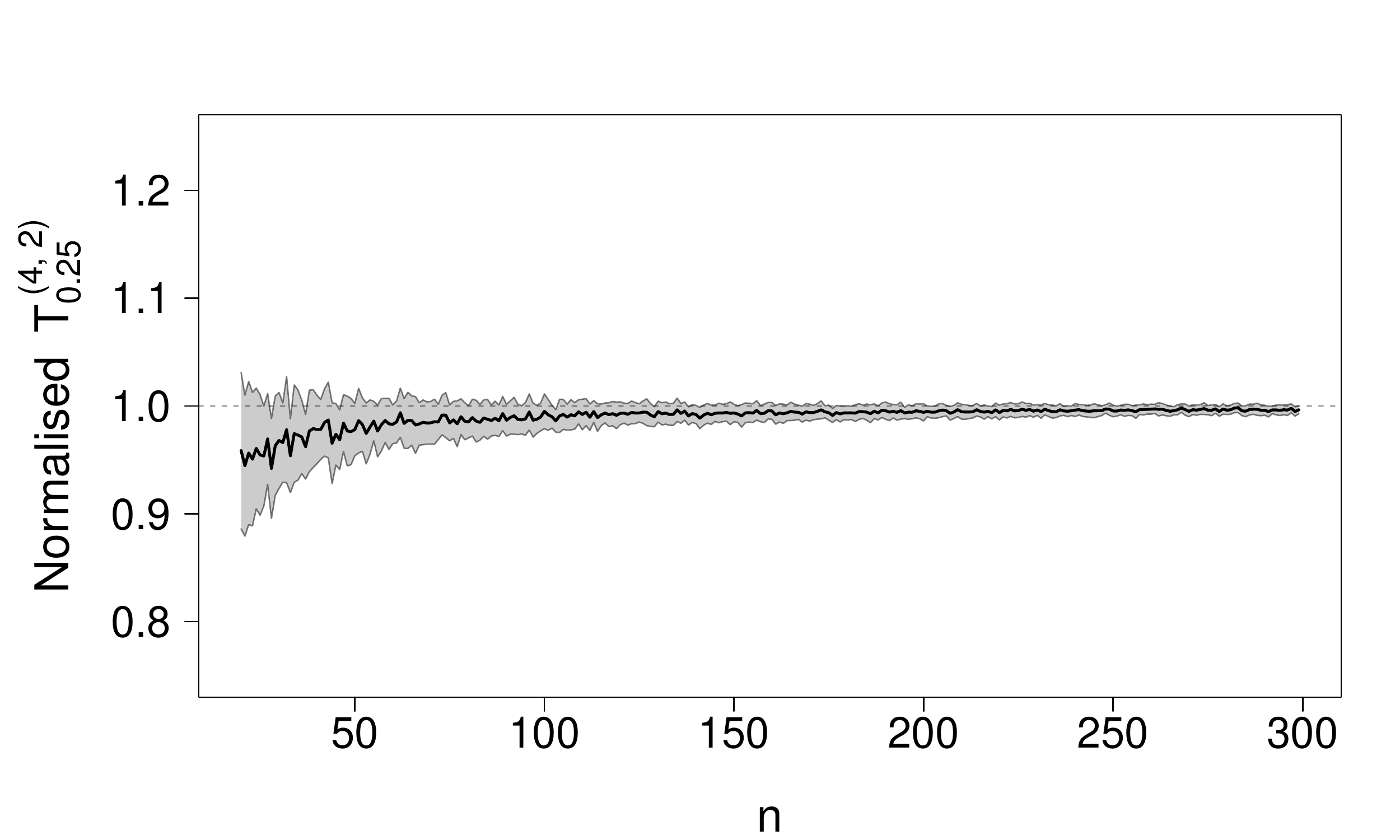} & \includegraphics[scale = 0.28]{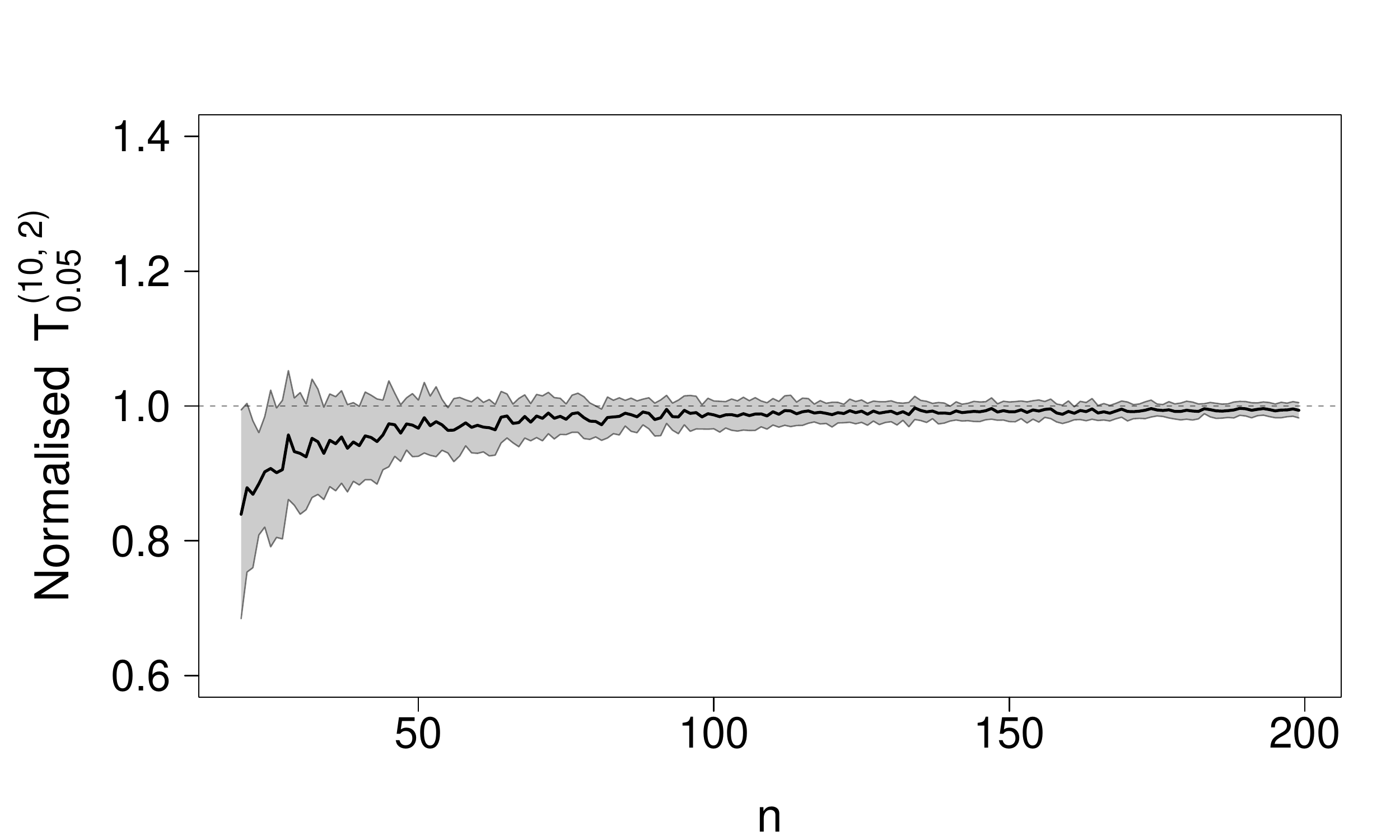} \\
        \quad\,\, (a) & \quad\,\,\,(b)
    \end{tabular}
    \caption{(a) $T^{(4, 2)}_{0.25}$ versus $n$; (b) $T^{(10, 2)}_{0.05}$ versus $n$. In both plots, $T^{(r, s)}_{\alpha}$ is normalised by $\frac{1}{s!\binom{r}{s}}n^s \log (1 / \alpha)$.}
    \label{fig:T-alpha-gccp}
\end{figure}

In the classical coupon collector problem, following an appropriate normalisation, one obtains a Gumbel limit for the time to collect all coupons. In Figure~\ref{fig:distribution}, we compare the empirical distribution of $T^{(10, 2)}$ (suitably normalised) against a standard Gumbel distribution. The excellent agreement observed in this simulation motivated us to posit and prove the following result.

\begin{figure}[!t]
    \centering
    \begin{tabular}{cc}
        \includegraphics[scale = 0.34]{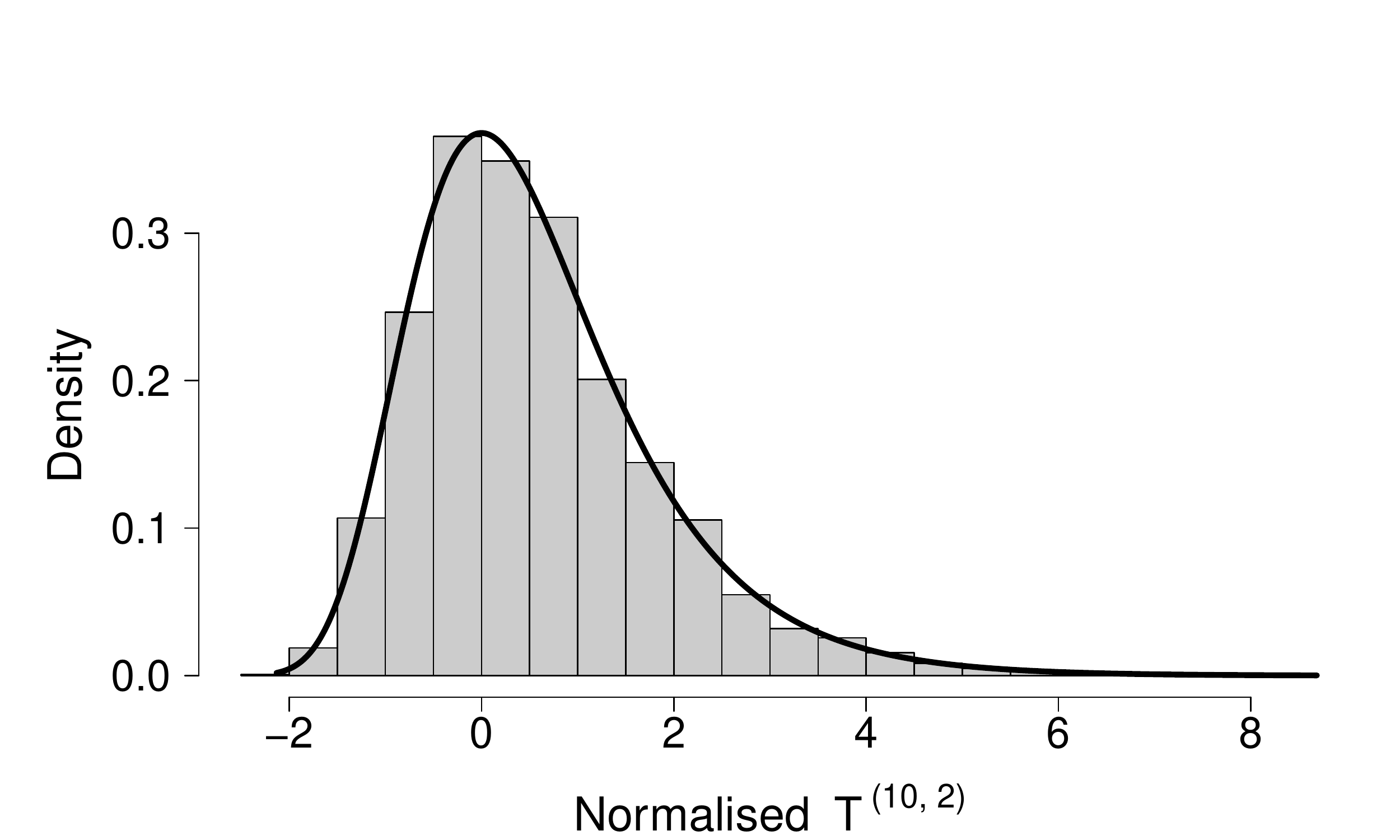} & \includegraphics[scale = 0.34]{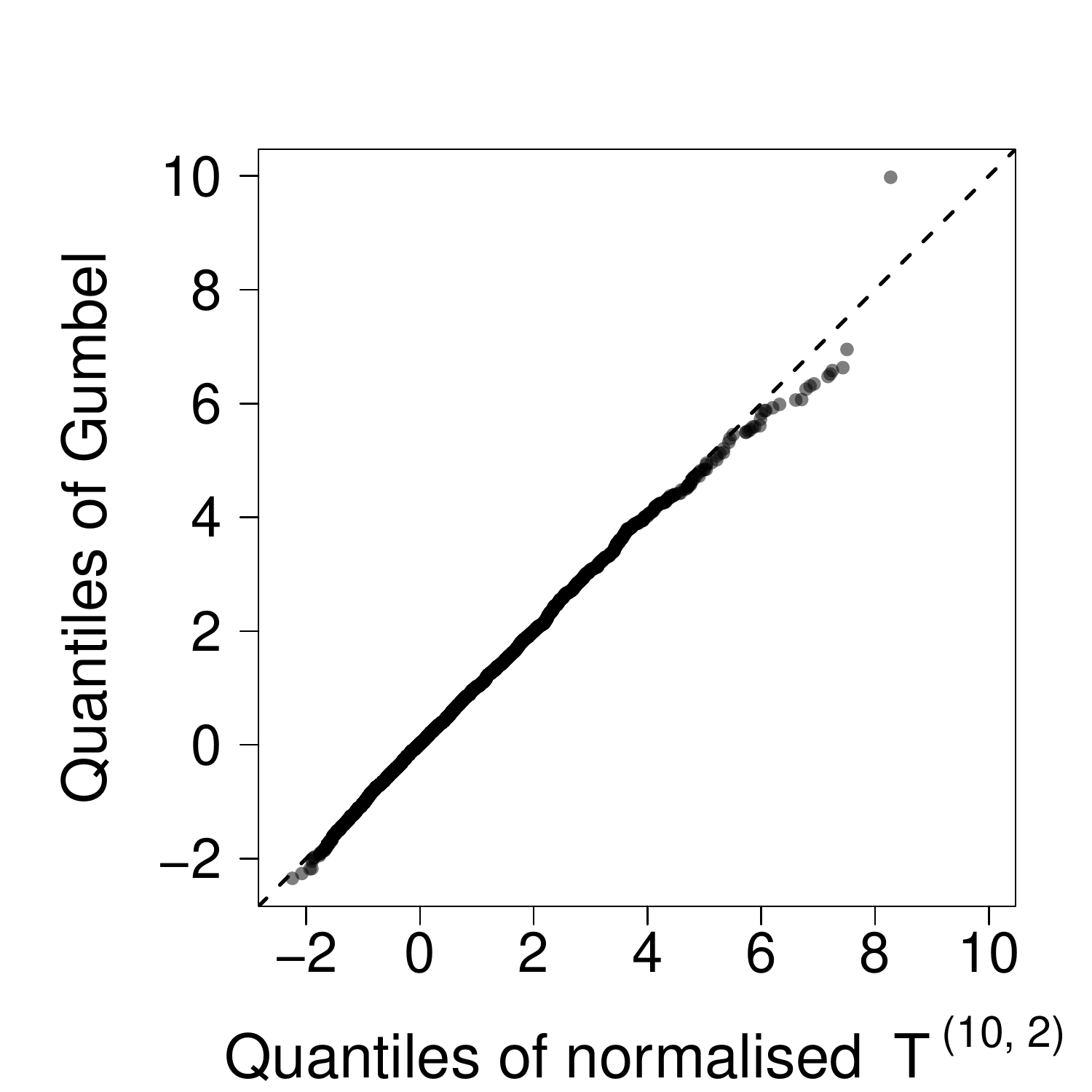} \\
        \quad\,\, (a) & \quad\,\,\,(b)
    \end{tabular}
    \caption{(a) Histogram of $T^{(10, 2)}$ with $n = 500$ over $5000$ replications (we normalise $T^{(r, s)}$ as $[\binom{n}{s} / \binom{r}{s}]^{-1}[T^{(r,s)} - \binom{n}{s} \log \binom{n}{s} / \binom{r}{s}]$). The solid curve depicts the density of a standard Gumbel distribution. (b) Q-Q plot of normalised $T^{(10, 2)}$ and standard Gumbel.}
    \label{fig:distribution}
\end{figure}

\begin{theorem}\label{thm:gumbel}
    As $n \to \infty$, $\frac{T^{(r,s)} - \binom{n}{s} \log \binom{n}{s} / \binom{r}{s}}{\binom{n}{s} / \binom{r}{s}}$ converges weakly to a standard Gumbel distribution.
\end{theorem}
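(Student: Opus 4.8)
The plan is to prove the sharper statement that for every fixed $c \in \mathbb{R}$, the number $N_t$ of super-coupons still uncollected after
\[
    t = t_n(c) := \Big\lfloor \tfrac{\binom{n}{s}}{\binom{r}{s}}\big(\log \tbinom{n}{s} + c\big)\Big\rfloor
\]
rounds converges in distribution to a $\poi(e^{-c})$ random variable. Granting this, since $\{T^{(r,s)} \le t_n(c)\} = \{N_{t_n(c)} = 0\}$ and $t_n(c) = \binom{n}{s}\log\binom{n}{s}/\binom{r}{s} + c\,\binom{n}{s}/\binom{r}{s} + O(1)$, one gets
\[
    \P\!\left(\frac{T^{(r,s)} - \binom{n}{s}\log\binom{n}{s}/\binom{r}{s}}{\binom{n}{s}/\binom{r}{s}} \le c\right) \longrightarrow \P(\poi(e^{-c}) = 0) = e^{-e^{-c}},
\]
and as $c \mapsto e^{-e^{-c}}$ is the (everywhere continuous) distribution function of a standard Gumbel law, the theorem follows.

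To prove the Poisson limit I would use the method of moments, i.e.\ compute the factorial moments of $N_t$. Put $p := \binom{r}{s}/\binom{n}{s}$, the probability that a fixed super-coupon is contained in a single uniformly random $r$-subset. Since the $r$-subsets drawn in distinct rounds are independent, for any $k$ distinct super-coupons $A_1,\dots,A_k$ we have
\[
    \P(A_1,\dots,A_k \text{ are all uncollected after } t \text{ rounds}) = q_k^{\,t}, \qquad q_k := \P(\text{a uniform } r\text{-subset contains none of } A_1,\dots,A_k),
\]
so that $\E[N_t(N_t-1)\cdots(N_t-k+1)] = \sum q_k^{\,t}$, the sum ranging over ordered $k$-tuples of distinct super-coupons.

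The heart of the argument is the uniform estimate $q_k^{\,t} = \binom{n}{s}^{-k} e^{-kc}(1+o(1))$, valid for all $k$-tuples of distinct super-coupons regardless of how they overlap. By inclusion--exclusion,
\[
    q_k = 1 - kp + \gamma_k, \qquad \gamma_k := \sum_{\substack{S \subseteq \{1,\dots,k\}\\ |S| \ge 2}} (-1)^{|S|}\,\frac{\binom{n - |U_S|}{\,r - |U_S|\,}}{\binom{n}{r}}, \qquad U_S := \bigcup_{i \in S} A_i .
\]
Because the $A_i$ are distinct $s$-subsets, $|U_S| \ge s+1$ whenever $|S| \ge 2$, and $\binom{n-m}{r-m}/\binom{n}{r} = O_r(n^{-m})$ for any fixed $m$ and $r$; hence $|\gamma_k| = O_{k,r}(n^{-(s+1)})$, which is $o(1/t)$ since $t = \Theta(n^s \log n)$. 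Taking logarithms and using $pt = \log\binom{n}{s} + c + o(1)$ and $p^2 t = O(n^{-s}\log n) \to 0$, we obtain $t \log q_k = -kpt + \gamma_k t + O(k^2 p^2 t) = -k\log\binom{n}{s} - kc + o(1)$, with the error uniform over the tuple. Summing over the $\binom{n}{s}^{k}(1+o(1))$ ordered tuples gives $\E[N_t(N_t-1)\cdots(N_t-k+1)] \to e^{-kc}$ for every $k \ge 1$; these being the factorial moments of $\poi(e^{-c})$, a law determined by its moments, the Poisson limit follows (alternatively, one sandwiches $\P(N_t = 0)$ between the even and odd Bonferroni partial sums $\sum_{k \le m}\frac{(-1)^k}{k!}\E[N_t(N_t-1)\cdots(N_t-k+1)]$ and lets $m \to \infty$ after $n \to \infty$).

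The step I expect to be the main obstacle is precisely this uniform control of $q_k^{\,t}$ over all intersection patterns: an overlapping tuple has $q_k$ closer to $1$, hence a larger $q_k^{\,t}$, which could a priori inflate the factorial-moment sum. The compensation is that overlapping tuples are fewer by a polynomial factor that exactly offsets the inflation, and the delicate point is to verify that the inclusion--exclusion remainder satisfies $\gamma_k = o(1/t)$ (not merely $\gamma_k = o(p)$), so that $\gamma_k t \to 0$ and the dominant term $-kpt$ stays on target. Everything after the factorial-moment computation is routine.
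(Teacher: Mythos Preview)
Your proposal is correct and follows essentially the same route as the paper: reduce to showing $N_{t_n(c)} \xrightarrow{d} \poi(e^{-c})$ via the method of moments, the key step being a uniform inclusion--exclusion estimate $q_k = 1 - kp + O_{k,r}(n^{-(s+1)})$ so that the remainder times $t$ is $o(1)$. The only cosmetic difference is that you compute factorial moments directly, whereas the paper reaches the same conclusion through raw moments via the decomposition $\E N_k^d = \sum_{l\le d} \binom{m}{l} C_{d,l}\,\E\prod_{j\le l} I_{S_j}^{(k)}$ and the $\psi$--bound of Lemma~\ref{lem:mom}.
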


Finally, we consider a further generalisation of our model. Let $H_t$ denote the $r$-subset drawn in round $t$. Note that in our problem, the $H_t$'s are independently and identically distributed as uniform over all possible $r$-subsets of the universe. A natural next step would be to introduce a temporal dependence across the $H_t$'s. The following conjecture predicts the expected time to collect all $s$-subsets of coupons (i.e. $s$-super-coupons) where $H_t$'s form a certain natural Markov chain.

\begin{conjecture}\label{conj:rw}
Perform a random walk on the space of all possible $r$-subsets of the universe of $n$ coupons as follows. Let $H_t$ denote the $r$-subset drawn in round $t$. Given $H_t$, one obtains $H_{t + 1}$ by choosing a uniformly random coupon in $H_t$ and replacing it with another uniformly chosen random coupon from outside $H_t$. Let $T^{(r,s)}_{\mathrm{RW}}$ denotes the time it takes to collect all $s$-super-coupons. Then
\[
    \E T^{(r,s)}_{\mathrm{RW}} = \frac{\frac{r}{s}\binom{n}{s} \log \binom{n}{s}}{\binom{r}{s}}(1 + o(1)).
\]
\end{conjecture}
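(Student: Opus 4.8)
\emph{Towards Conjecture~\ref{conj:rw}.}
Write $A_S := \{H \in \binom{[n]}{r} : S \subseteq H\}$ for each super-coupon $S \in \binom{[n]}{s}$; then $S$ is collected by time $t$ precisely when the walk $(H_t)$ has visited $A_S$ by time $t$, so
\[
    T^{(r,s)}_{\mathrm{RW}} = \max_{S \in \binom{[n]}{s}} \tau_{A_S}, \qquad \tau_{A_S} := \inf\{t \ge 0 : H_t \in A_S\}.
\]
The structural observation that makes this tractable is that the swap dynamics is amenable to lumping: for a fixed $s$-set $S$, the process $X_t := |H_t \cap S|$ is itself a Markov chain --- a birth--death chain on $\{0, 1, \ldots, s\}$ with
\[
    p_{j, j+1} = \frac{(r - j)(s - j)}{r(n - r + 1)}, \qquad p_{j, j-1} = \frac{j(n - r - s + j)}{r(n - r + 1)},
\]
stationary law hypergeometric, and $A_S$ the top state $\{X = s\}$. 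Thus $\tau_{A_S}$ is the hitting time of a single rarely-visited state in a finite chain with $O(1)$ states and relaxation time $O(\mathrm{poly}(r))$. (When $r = s$ this is, up to a time rescaling, a two-state chain, $\tau_{A_S}$ is the hitting time of the vertex $S$ in the Johnson graph $J(n,s)$, and one is back to the cover-time picture of Theorem~\ref{thm:rw_r=s}.)

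The relevant parameter is the stationary rate of fresh entries into $A_S$, namely
\[
    \lambda := \pi(A_S)\, p_{s, s-1} = \frac{\binom{r}{s}}{\binom{n}{s}} \cdot \frac{s(n - r)}{r(n - r + 1)} = \frac{s}{r} \cdot \frac{\binom{r}{s}}{\binom{n}{s}}(1 + o(1)),
\]
so that $\lambda^{-1}\log\binom ns$ is exactly the conjectured asymptotic mean, and the extra factor $r/s$ relative to Theorem~\ref{thm:gccp} comes from the factor $s/r$ --- the probability that a swap deletes one of the $s$ coordinates of $S$. The plan is then to invoke the theory of hitting times of rarely-visited sets in rapidly mixing finite chains to get $\P(\tau_{A_S} > t) = (1 + o(1))e^{-\lambda t}$, uniformly over the (essentially arbitrary) starting configuration and over $t$ in the relevant window. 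Granting this, the \emph{upper} bound on $\E T^{(r,s)}_{\mathrm{RW}}$ is immediate: a union bound gives $\P(T^{(r,s)}_{\mathrm{RW}} > t) \le \binom{n}{s}(1 + o(1))e^{-\lambda t} \to 0$ for $t = \lambda^{-1}(\log\binom{n}{s} + \omega_n)$ with $\omega_n \to \infty$ slowly, and a crude restart/Matthews-type tail bound on the cover time of $J(n,r)$ controls $\int_t^\infty \P(T^{(r,s)}_{\mathrm{RW}} > u)\, du$, yielding $\E T^{(r,s)}_{\mathrm{RW}} \le (1 + o(1))\, \tfrac{r}{s}\binom{r}{s}^{-1}\binom{n}{s}\log\binom{n}{s}$.

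For the matching \emph{lower} bound I would run the second moment method on $N_t := \#\{S : \tau_{A_S} > t\}$ (the number of super-coupons still missing at time $t$) at $t = \lambda^{-1}(\log\binom{n}{s} - \omega_n)$, so that $\E N_t = (1 + o(1))e^{\omega_n} \to \infty$ and $\{T^{(r,s)}_{\mathrm{RW}} > t\} = \{N_t \ge 1\}$. Here $\E[N_t^2] = \sum_{S, S'} \P(\tau_{A_S \cup A_{S'}} > t)$, and a further lumping is available: $\big(|H_t \cap (S\setminus S')|,\, |H_t \cap (S'\setminus S)|,\, |H_t \cap (S \cap S')|\big)$ is again a finite Markov chain. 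One must show that when $S \cap S' = \emptyset$ the stationary fresh-entry rate into $A_S \cup A_{S'}$ is $2\lambda(1 + o(1))$ --- whence $\P(\tau_{A_S\cup A_{S'}} > t) = (1+o(1))e^{-2\lambda t}$ and the disjoint pairs contribute $(1+o(1))(\E N_t)^2$ --- while for $|S \cap S'| = k \ge 1$ the $O(n^{2s - k})$ such pairs each satisfy $\P(\tau_{A_S\cup A_{S'}} > t) \le (1+o(1))e^{-2\lambda t} \asymp n^{-2s}e^{2\omega_n}$ (the cross-entry and simultaneous-entry rates being lower order), so they contribute only $O(n^{-k}e^{2\omega_n}) = o((\E N_t)^2)$. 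Then $\E[N_t^2] = (1+o(1))(\E N_t)^2$, Paley--Zygmund gives $\P(T^{(r,s)}_{\mathrm{RW}} > t) \to 1$, and $\E T^{(r,s)}_{\mathrm{RW}} \ge t\,\P(T^{(r,s)}_{\mathrm{RW}} > t) = (1 - o(1))\lambda^{-1}\log\binom{n}{s}$.

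The conceptual picture is thus a clean coupon-collector heuristic for the $\binom{n}{s}$ rare events $A_S$, each of rate $\lambda$, with hitting times decorrelating across all but an asymptotically negligible set of pairs. The main obstacle --- and the reason this remains a conjecture --- is the \emph{quantitative} input: the exponential approximation $\P(\tau_{A_S} > t) = (1 + o(1))e^{-\lambda t}$ and its two-set analogue must hold with \emph{relative} error $o(1)$ for $t$ as large as $\asymp n^s\log n$, whereas off-the-shelf exponential-approximation estimates only give additive errors of order $t_{\mathrm{rel}}/\E_\pi\tau_{A_S}$, not obviously negligible against $e^{-\lambda t}$ (a quantity that can be as small as $\binom{n}{s}^{-1}$). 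Making this rigorous appears to require exploiting the explicit birth--death structure: showing that the Perron eigenvalue of the substochastic matrix obtained by killing at $A_S$ equals $\lambda\,(1 + o(1/\log n))$ --- via first-order perturbation theory about the quasi-stationary distribution, with a controlled $O(\lambda^2 t_{\mathrm{rel}})$ bound on the second-order correction --- that the next eigenvalue is separated by $\Theta(1)$, and that all of this is uniform over the $\binom{n}{s}^2$ two-set chains. Carrying this through, and pinning down the exact range of $r$ (ideally all of $s \le r \le n$) for which the perturbative errors are controlled, is the substantive work that remains.
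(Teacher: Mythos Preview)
The statement you are addressing is a \emph{conjecture} in the paper; the authors do not prove it. What they do prove is the special case $r=s$ (Theorem~\ref{thm:rw_r=s}), and their argument there is quite different from yours: they observe that $T_{\mathrm{RW}}^{(r,r)}-1$ is the cover time of the Johnson graph $J(n,r)$, invoke Matthews' two-sided bound $t_{\mathrm{cov}} \sim (\log m)\cdot H(u,v)$, and then compute the hitting times $h_k = H(u,v)$ for $|u\cap v|=k$ by solving the first-step recursion explicitly, showing $h_k = \frac{n^r}{r!}(1+o(1))$ for all $k<r$. No exponential approximation, no second moment --- just a finite linear recursion and Matthews.

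Your proposal, by contrast, is a program for the \emph{general} $s\le r$ case, which is strictly more than the paper attempts. The key structural observations are correct: the lumping $X_t = |H_t\cap S|$ is valid (the swap dynamics is indeed lumpable with respect to this partition), the stationary law of $X_t$ is hypergeometric with $\pi(A_S)=\binom{r}{s}/\binom{n}{s}$, and your identification of the effective rate $\lambda = \pi(A_S)\,p_{s,s-1} = \tfrac{s}{r}\binom{r}{s}/\binom{n}{s}(1+o(1))$ is the right one --- the factor $s/r$ arises because sojourns in $A_S$ have geometric length with mean $r/s$, so Kac's formula for the return time must be corrected accordingly to get the hitting time from outside. (A minor slip: your transition probabilities have $n-r+1$ in the denominator where the model gives $n-r$, since the replacement coupon is drawn from $[n]\setminus H_t$; this is harmless for the asymptotics.) The two-set lumping you propose for the second moment is also valid.

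You are candid that this is not a proof, and your diagnosis of the obstacle is accurate: the exponential approximation $\P(\tau_{A_S}>t)=(1+o(1))e^{-\lambda t}$ must be established with \emph{multiplicative} error $o(1)$ out to $t$ of order $\lambda^{-1}\log\binom{n}{s}$, where $e^{-\lambda t}$ is as small as $\binom{n}{s}^{-1}$, and standard Aldous--Brown type bounds only give additive error of order $t_{\mathrm{rel}}/\E_\pi\tau_{A_S}$. Your suggested route via perturbation of the killed spectrum of the $(s{+}1)$-state birth--death chain is plausible and probably the right way forward, but carrying it out uniformly over the $O(\binom{n}{s}^2)$ two-set lumped chains is real work. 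In short: your sketch is a reasonable roadmap toward the full conjecture, correctly isolates the decisive parameter, and goes well beyond what the paper proves; but it is, as you say, not yet a proof.
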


We present simulations in support of the above conjecture in Figure \ref{fig:T-gccp-rw}.
\begin{figure}[!t]
    \centering
    \begin{tabular}{cc}
        \includegraphics[scale = 0.28]{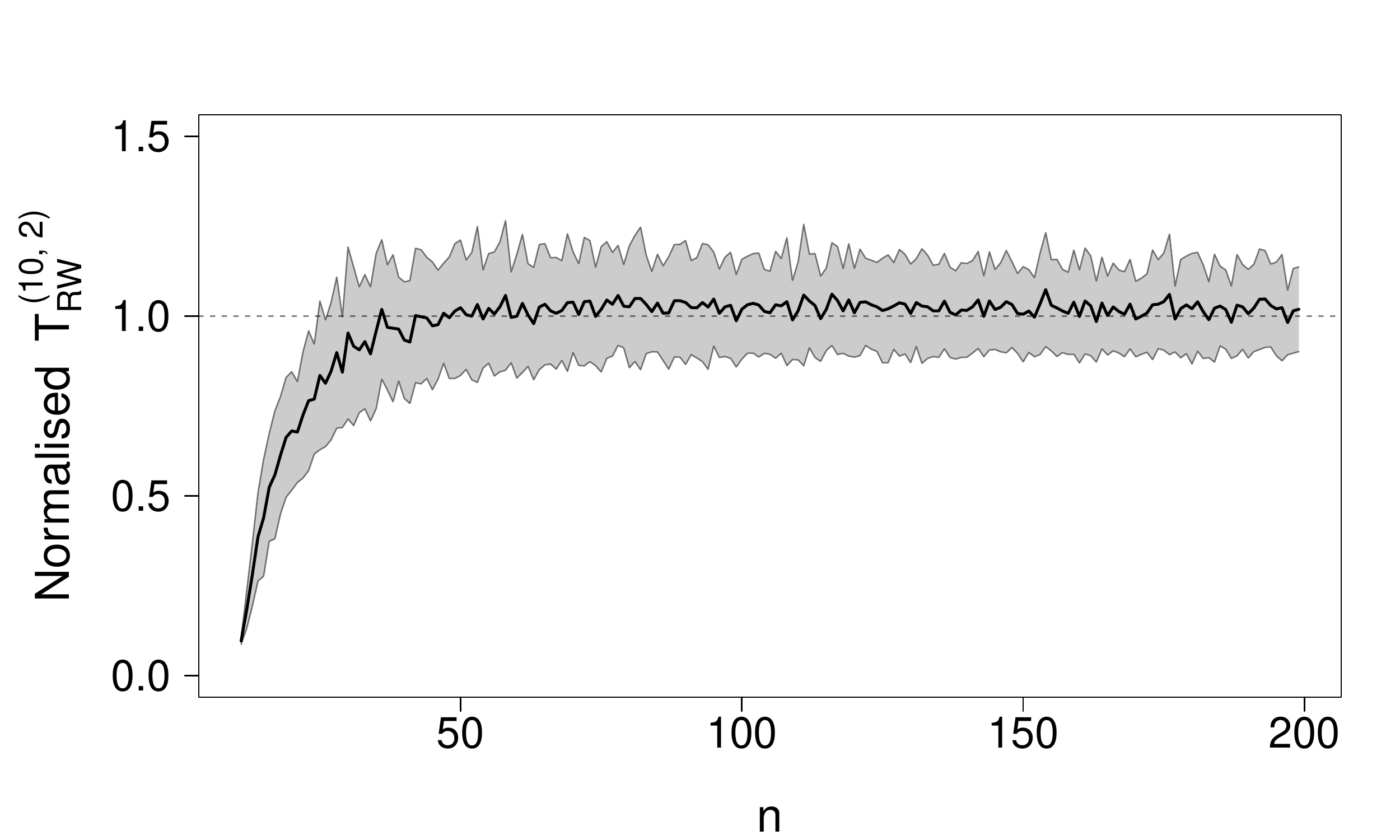} & \includegraphics[scale = 0.28]{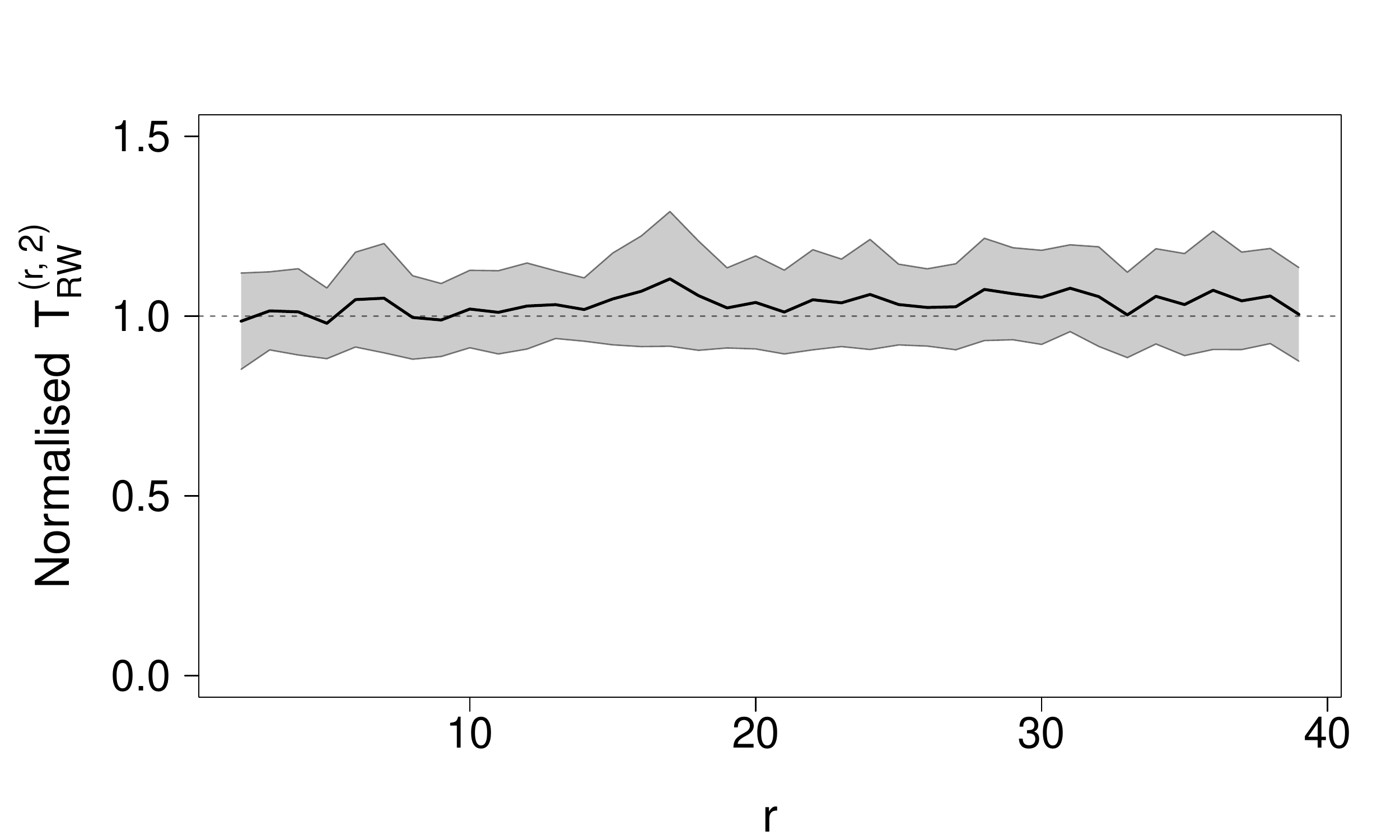} \\
        \quad\,\, (a) & \quad\,\,\,(b)
    \end{tabular}
    \caption{(a) $T^{(10, 2)}_{\mathrm{RW}}$ versus $n$; (b) $T^{(r, 2)}_{\mathrm{RW}}$ versus $r$ with $n = 200$. In both plots, $T^{(r, 2)}_{RW}$ is normalised by $\frac{1}{r - 1} n^2 \log n$.}
    \label{fig:T-gccp-rw}
\end{figure}

The $r = s$ case of Conjecture~\ref{conj:rw} can be rephrased as a cover time question. Consider a graph where each vertex is a super-coupon of size $r$ and two vertices are connected by an edge if they share all but a single coupon. This is an $r(n - r)$-regular graph on $m = \binom{n}{r}$ vertices. It is not hard to see that $T_{\mathrm{RW}}^{(r, r)} - 1$ is precisely the cover time (i.e. the expected time to visit all the vertices) of a random walk on this graph started at a uniformly random vertex. Exploiting the symmetry of this graph, we can then obtain sharp estimates of hitting times of vertices, which in turn yield a sharp estimate of the cover time, leading to the following theorem.

\begin{theorem}\label{thm:rw_r=s}
    Let $T_{\mathrm{RW}}^{(r, s)}$ be as defined in Conjecture~\ref{conj:rw}. We have
\[
    \E T_{\mathrm{RW}}^{(r, r)} = \binom{n}{r} \log \binom{n}{r}(1 + o(1)).
\]
\end{theorem}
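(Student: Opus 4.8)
The plan is to realize $T_{\mathrm{RW}}^{(r,r)} - 1$ as the cover time of a random walk on the vertex-transitive graph $G$ whose vertices are the $r$-subsets of $[n]$, with two subsets adjacent when they differ in exactly one coupon. This graph is $d$-regular with $d = r(n-r)$ and has $m = \binom{n}{r}$ vertices, so stationary distribution is uniform. The strategy is the standard ``expected cover time via hitting times'' route, but we exploit the strong symmetry (the automorphism group acts transitively on vertices, indeed on ordered pairs at a given distance) to get sharp constants.

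First I would set up Matthews-style bounds or, more precisely, the sharper approach: use the fact that for a random walk, the cover time $C$ satisfies $\mathbb{E}[C] = \sum_{k=1}^{m-1} \frac{1}{k} \cdot (\text{something})$ only in special cases, so instead I would go via the formula relating cover time to the maximal hitting time and the structure of the last vertex visited. Concretely, I would first compute $t_{\mathrm{hit}} := \max_{u \neq v} \mathbb{E}_u[\tau_v]$, the maximal expected hitting time. By vertex-transitivity, $\mathbb{E}_u[\tau_v]$ depends only on the ``distance'' type of the pair $(u,v)$; for a reversible chain on a vertex-transitive graph one has the identity $\mathbb{E}_\pi[\tau_v] = \sum_{i \geq 2} \frac{1}{1 - \lambda_i}$ (independent of $v$), where $\lambda_i$ are the eigenvalues of the transition matrix, and moreover $\mathbb{E}_u[\tau_v] \le \mathbb{E}_\pi[\tau_v] + (\text{a lower-order spectral correction})$. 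The key computation is therefore the spectrum of the random walk on $G$: this is the Johnson graph $J(n,r)$, whose eigenvalues are classically known to be $\lambda_j = \frac{(r-j)(n-r-j) - j}{r(n-r)}$ for $j = 0, 1, \ldots, r$, with multiplicity $\binom{n}{j} - \binom{n}{j-1}$. Plugging in, $\sum_{j \geq 1} \frac{\text{mult}(j)}{1 - \lambda_j}$ is dominated by the $j=1$ term, which is $\approx m$, and in fact one gets $\mathbb{E}_\pi[\tau_v] = m(1 + o(1))$; the contributions from $j \geq 2$ are $o(m)$ since the multiplicities grow like $\binom{n}{j} \sim n^j/j!$ while $1 - \lambda_j$ stays bounded away from $0$, giving terms of order $n^j / n^2 = n^{j-2}$ summing to $O(n^{r-2}) = o(n^r) = o(m)$ — wait, one must check this more carefully for $j$ close to $r$, but the multiplicities there are $\binom{n}{j} \sim n^j$ and $1-\lambda_j = \Theta(1)$, so the total is $O(n^{r-1}) = o(m)$ since $m \sim n^r / r!$; so indeed all hitting times are $m(1+o(1))$.

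Having established $\max_{u,v} \mathbb{E}_u[\tau_v] = m(1 + o(1))$ and also (by a second-moment / spectral variance bound) that hitting times concentrate, I would then invoke the general cover-time machinery: the upper bound $\mathbb{E}[C] \le t_{\mathrm{hit}}(\log m + 1)$ is Matthews' inequality, giving $\mathbb{E}[C] \le m \log m (1 + o(1))$ since $\log m = \log \binom{n}{r} \sim r \log n$ and the $+1$ is negligible. For the matching lower bound, I would use the ``Matthews lower bound'' variant, which requires a large set of vertices that are mutually far apart (pairwise hitting times all $\ge (1-o(1)) t_{\mathrm{hit}}$); by the transitivity and the computation above, \emph{almost all} pairs of vertices have hitting time $m(1+o(1))$ (the ones with smaller hitting time are the $o(m)$ ``nearby'' pairs), so one can extract a subset $S$ of size $m^{1-o(1)}$ with all pairwise hitting times $m(1+o(1))$, yielding $\mathbb{E}[C] \ge m \log|S|(1+o(1)) = m \log m (1 + o(1))$. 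Finally, since $T_{\mathrm{RW}}^{(r,r)} = C + 1$ and the walk starts from the uniform (stationary) distribution — which only helps — we conclude $\mathbb{E} T_{\mathrm{RW}}^{(r,r)} = \binom{n}{r}\log\binom{n}{r}(1+o(1))$.

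The main obstacle I anticipate is the lower bound: Matthews' lower bound is notoriously lossy in general, and making it sharp here requires genuinely controlling the \emph{distribution} (not just the mean) of hitting times $\tau_v$ from a typical start — one needs that $\tau_v / m$ is asymptotically exponential, or at least that $\mathbb{P}_u(\tau_v \le \epsilon m)$ is small, uniformly over the relevant pairs. This should follow from the spectral gap being of constant order ($1 - \lambda_1 = \Theta(1/n) \cdot$ no — actually $1-\lambda_1 = \frac{1 + (r-1) + (n-r-1)\cdot 0\ldots}{r(n-r)}$, let me not commit; the relaxation time is $\Theta(n)$ which is $o(m)$), so the chain mixes in time $o(m)$ and hitting times of individual vertices are approximately exponential with mean $\sim m$; this exponential approximation, combined with a union bound over a well-separated family, is what powers the sharp lower bound. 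Verifying the exponential approximation rigorously (e.g., via the ``hitting time of a small set $\approx$ exponential when mixing time $\ll$ hitting time'' principle, cf. Aldous--Fill) is the technical heart of the argument.
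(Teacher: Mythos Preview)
Your proposal is essentially correct and shares the paper's starting point --- recognising $T_{\mathrm{RW}}^{(r,r)}-1$ as the cover time of the random walk on the Johnson graph $J(n,r)$ and invoking Matthews' bounds --- but diverges in how the hitting times are handled, and this is where you make life harder for yourself than necessary.

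The paper computes the hitting times \emph{directly}. By the automorphism symmetry you also note, $H(u,v)$ depends only on $k=|u\cap v|$; call it $h_k$. A first-step analysis yields a three-term linear recursion for $h_k$ (a neighbour $w$ of $u$ has $|w\cap v|\in\{k-1,k,k+1\}$, and one counts each case combinatorially). Solving this recursion shows $h_k = \tfrac{n^r}{r!}(1+o(1)) = m(1+o(1))$ for \emph{every} $0\le k\le r-1$. Since $\min_{u,v}H(u,v)$ and $\max_{u,v}H(u,v)$ now agree to leading order, the two sides of Matthews' theorem pinch the cover time to $m\log m(1+o(1))$ with no further work.

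Your route instead computes $\mathbb{E}_\pi[\tau_v]$ via the Johnson spectrum (a minor slip: the dominant term in $\sum_j \mathrm{mult}(j)/(1-\lambda_j)$ is $j=r$, not $j=1$, since $1-\lambda_j\sim j/r$ and $\mathrm{mult}(j)\sim n^j/j!$; the conclusion $\sim m$ is nonetheless correct). This gives the Matthews upper bound, but for the lower bound you are then forced into the heavier machinery of exponential approximation of hitting times (Aldous--Fill) and extracting a well-separated subset. That works, but it is unnecessary: the missing observation is that once you know \emph{all} hitting times $H(u,v)$ are $m(1+o(1))$ --- which the recursion delivers, or which follows more cheaply from ``mixing time $\Theta(n)\ll m$ implies $H(u,v)=\mathbb{E}_\pi[\tau_v](1+o(1))$ uniformly'' without any distributional control --- the lower half of Matthews' theorem applies to the full vertex set and matches the upper bound immediately. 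The paper's recursion approach is thus more elementary and self-contained; your spectral approach trades a short combinatorial computation for citations to the Johnson spectrum and hitting-time approximation theorems.
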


\begin{remark}
Notice that, according to Conjecture~\ref{conj:rw},
\[
    \frac{\E T^{(r, s)}_{\mathrm{RW}}}{\E T^{(r, s)}} = \frac{r}{s} (1 + o(1)),
\]
i.e. we have a slowdown by a factor of $r/s$ in the Markovian model. Interestingly, there is no discrepancy between the two when $r = s$. A heuristic explanation of this discrepancy (and its absence in the $r = s$ case) is that in the Markovian model, we observe $\binom{r - 1}{s - 1}$ many new super-coupons in round $t + 1$, which are different from those observed in round $t$, whereas in the original model, for any given round, we observe $\binom{r}{s}$ many new super-coupons in the next round with high probability.
\end{remark}

\section{Proofs}
Fix $1\leq s \leq r \leq n.$ Let $N_k := $ the number of super-coupons that have not been collected by time $k$. It is easy to observe that
\begin{equation} \label{eq:TtoN}
    \E T^{(r, s)}_{\alpha} = \sum_{k \ge 0} \P(T^{(r, s)}_{\alpha} > k) = \sum_{k \ge 0} \P(N_k > 0).
\end{equation}
We will use the first and second moment methods to bound $\P(N_k >0).$ The next lemma gives estimates on the moments of $N_k$ for all $k \ge 0$.

\begin{lemma}\label{lem:mom}
Let $m = \binom{n}{s}$, $\theta = \frac{\binom{n - s}{r - s}}{\binom{n}{r}}$ and $\psi = \frac{r^r n^{-(s + 2)}}{\max\{r - 2s, 0\}!}$. For $k \ge 1, d \ge 2$, we have
\begin{align} \label{eq:mom_1}
    & \E N_k = m (1 - \theta)^k; \\ \label{eq:mom_d}
    & d! \binom{m}{d} (1 - d\theta)^k \le \E N_k^d \le d! \binom{m}{d} \bigg(1 - d \theta + \binom{d}{2}\psi\bigg)^k + d^2 m^{d - 1} (1 - \theta)^k.
\end{align}
\end{lemma}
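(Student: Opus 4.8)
The plan is to reduce both parts of Lemma~\ref{lem:mom} to single-round computations, using that the $r$-subsets drawn in distinct rounds are independent. For \eqref{eq:mom_1}: a uniformly random $r$-subset $H$ contains a fixed $s$-subset $A$ exactly when $H$ is one of the $\binom{n-s}{r-s}$ supersets of $A$, so $\P(A\subseteq H)=\theta$; by independence across rounds $\P(A\text{ uncollected by time }k)=(1-\theta)^k$, and summing over the $m$ super-coupons gives $\E N_k=m(1-\theta)^k$. For the higher moments, write $N_k=\sum_A X_A$ with $X_A=\mathbf{1}\{A\text{ uncollected by time }k\}$, so that $\E N_k^d=\sum_{(A_1,\dots,A_d)}\E[X_{A_1}\cdots X_{A_d}]$, the sum ranging over all $m^d$ ordered $d$-tuples of $s$-subsets; again by round-independence $\E[X_{A_1}\cdots X_{A_d}]=q(A_1,\dots,A_d)^k$, where $q(A_1,\dots,A_d)$ is the probability that none of $A_1,\dots,A_d$ is contained in a single uniformly random $r$-subset. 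I would split this sum according to whether the $d$-tuple has all distinct entries.

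For the lower bound in \eqref{eq:mom_d}, keep only the $d!\binom md$ tuples with distinct entries: for such a tuple, a union bound over the events $E_i=\{A_i\subseteq H\}$ gives $q(A_1,\dots,A_d)=1-\P(\bigcup_iE_i)\ge 1-d\theta$, hence $\E N_k^d\ge d!\binom md(1-d\theta)^k$ (for $n$ so large that $d\theta\le1$). For the upper bound, any tuple that is not all-distinct still contains at least one event $E_i$, so $q(A_1,\dots,A_d)\le1-\theta$; since the number of such tuples equals $m^d-d!\binom md\le\binom d2 m^{d-1}\le d^2m^{d-1}$ (by the elementary inequality $\prod_{i=0}^{d-1}(1-i/m)\ge1-\binom d2/m$), these contribute at most $d^2m^{d-1}(1-\theta)^k$ in total, which is the second term. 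For a tuple with distinct entries I would apply Bonferroni's inequality $\P(\bigcup_{i=1}^dE_i)\ge\sum_i\P(E_i)-\sum_{i<j}\P(E_i\cap E_j)$ to obtain $q(A_1,\dots,A_d)\le1-d\theta+\sum_{i<j}\P(A_i\cup A_j\subseteq H)$, and then bound the pairwise overlap probabilities uniformly. Here $\P(A\cup B\subseteq H)=\binom{n-|A\cup B|}{r-|A\cup B|}\big/\binom nr$ is nonincreasing in $w=|A\cup B|$ (raising $w$ by one multiplies it by $(r-w)/(n-w)\le1$), so over distinct $s$-subsets $A,B$ it is largest when $|A\cap B|=s-1$; a routine, deliberately crude estimate of that extremal ratio of falling factorials shows it does not exceed $\psi$ (the convention $\max\{r-2s,0\}$ covers the regime $r<2s$, and when $r=s$ no two distinct $s$-subsets co-occur so every such term vanishes). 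Substituting gives $q(A_1,\dots,A_d)\le1-d\theta+\binom d2\psi$ on distinct tuples, and summing over the $d!\binom md$ of them yields the first term.

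The only genuinely calculational step is this last uniform bound $\P(A\cup B\subseteq H)\le\psi$; everything else is bookkeeping. The subtlety worth flagging lies not in proving the lemma but in its use: downstream it is applied with $k\asymp\theta^{-1}\log m$, where the correction $\binom d2\psi$ must be negligible next to $d\theta$ --- precisely, $\big(1-d\theta+\binom d2\psi\big)^k$ must stay comparable to $(1-d\theta)^k$, i.e.\ $\psi k\to0$ --- and this is what forces $\psi$ to be taken as small as it is.
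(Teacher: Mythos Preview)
Your plan is essentially identical to the paper's: write $N_k=\sum_S I_S^{(k)}$, expand the $d$-th power, separate tuples with distinct entries from the rest, bound the non-distinct contribution by $d^2 m^{d-1}(1-\theta)^k$ via the same counting inequality, and on distinct tuples use the union bound for the lower estimate and Bonferroni for the upper. So far everything matches.

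The gap is in the step you flag as ``routine''. You correctly identify the extremal case for the pairwise overlap as $|A\cap B|=s-1$ (equivalently $|A\cup B|=s+1$), and there
\[
\P(A\cup B\subseteq H)=\frac{\binom{n-s-1}{r-s-1}}{\binom{n}{r}}=\frac{r(r-1)\cdots(r-s)}{n(n-1)\cdots(n-s)}=\Theta\bigl(n^{-(s+1)}\bigr),
\]
while $\psi=\dfrac{r^r}{\max\{r-2s,0\}!}\,n^{-(s+2)}=\Theta\bigl(n^{-(s+2)}\bigr)$. Hence for all large $n$ the pairwise probability is \emph{strictly larger} than $\psi$, and the uniform bound $\P(A\cup B\subseteq H)\le\psi$ that you assert simply fails at this extremal case; no ``crude estimate'' can rescue it. The paper's own proof reaches the stated $\psi$ only by writing ``Note that $\ell\le s-2$'', which is false for distinct $s$-subsets (take $\{1,\dots,s\}$ and $\{1,\dots,s-1,s+1\}$). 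Your last paragraph already contains the remedy: only $\psi k\to 0$ for $k\asymp\theta^{-1}\log m$ is needed downstream, and replacing the exponent $s+2$ by $s+1$ in the definition of $\psi$ (so $\psi=\Theta(n^{-(s+1)})$) makes the pairwise bound valid while keeping $\psi k=O(n^{-1}\log n)\to 0$; Corollary~\ref{cor:asym} and the later proofs go through with the harmless change $O(\log n/n^{2\wedge s})\mapsto O(\log n/n^{1\wedge s})$. So your argument proves the lemma with this corrected $\psi$, not with the one stated.
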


\begin{proof}
Let $\cS = \{S \subset [n] : |S| = s\}$. Let $\{S_1, \ldots, S_m\}$ be an enumeration of $\cS$. We write $N_k$ as follows:
\[
    N_k = \sum_{S \in \cS} I_S^{(k)},
\]
where $I_S^{(k)}$ denotes the indicator of the event that $S$ has not been collected by time $k$. Thus
\[
    \E N_k = \binom{n}{s} \E I_{S_1}^{(k)}.
\]
Now, denoting by $E_S$ the event that $S$ is observed in round 1, we have
\begin{align*}
    \E I_S^{(k)} &= \P(S \text{ has not been collected at by time } k) \\
    &= \left[\P(S \text{ is not collected in one round}) \right]^k \\
    &= \left[\P(E_S^c)\right]^k \\
    &= \left[1 - \P(E_S) \right]^k.
\end{align*}
We get \eqref{eq:mom_1} by noting that $\P(E_S) = \frac{\binom{n - s}{r - s}}{\binom{n}{r}} = \theta$.

Let $C_{d, l}$ be the number of words of length $d$ made up of $l$ given letters such that each letter appears at least once. Note that
\begin{align} \label{eq:cdd}
    & C_{d, d} = d! \text{ and} \\ \label{eq:cdl}
    & \sum_{l = 1}^{d - 1} \binom{m}{l} C_{d, l} \leq m^d\bigg(1 - \bigg(1 - \frac{d}{m}\bigg)^d\bigg) \le d^2 m^{d - 1},
\end{align}
where the first inequality in \eqref{eq:cdl} follows from lower bounding the probability that an $m$-sided die rolled $d$ times gives a sequence of $d$ distinct values by $\big(1 - \frac{d}{m}\big)^d$.

Now consider the following decomposition:
\begin{align}\nonumber
    \E N_k^d = \E \bigg[\sum_{S_{i_1}, \ldots, S_{i_d} \in \cS} \,\, \prod_{j = 1}^d I_{S_{i_j}}^{(k)}\bigg] &= \sum_{S_{i_1}, \ldots, S_{i_d} \in \cS} \E \prod_{j = 1}^d I_{S_{i_j}}^{(k)} \\ \nonumber
    &= \sum_{l = 1}^d \sum_{\substack{S_{i_1}, \ldots, S_{i_l} \in \cS \\ S_{i_j}\text{'s are distinct}}} C_{d, l} \, \E \prod_{j = 1}^l I_{S_{i_j}}^{(k)} \\ \label{eq:prod_formula}
    &= \sum_{l = 1}^d \binom{m}{l} C_{d, l} \, \E \bigg[\prod_{j = 1}^l I_{S_j}^{(k)}\bigg],
\end{align}
which gives
\begin{equation}\label{eq:dual-bd-dth-mom}
    \binom{m}{d} C_{d, d} \E \bigg[\prod_{j = 1}^d I_{S_j}^{(k)}\bigg] \le \E N_k^d \le \binom{m}{d} C_{d, d} \, \E \bigg[\prod_{j = 1}^d I_{S_j}^{(k)}\bigg] + \sum_{l = 1}^{d - 1} \binom{m}{l} C_{d, l} \, \E I_{S_1}^{(k)}.
\end{equation}

Let us now estimate $\E \big[\prod_{j=1}^d I_{S_j}^{(k)}\big]$. First note that
\begin{align*}
    \E \bigg[\prod_{j=1}^d I_{S_j}^{(k)}\bigg] &= \P(\text{none of $S_1, \ldots, S_d$ has been observed by time $k$}) \\
    &= \left[\P(\text{none of $S_1, \ldots, S_d$ has been observed in one round})\right]^k \\
    &= \left[\P(\cap_{j = 1}^d E_{S_j}^c)\right]^k \\
    &= \left[1 - \P(\cup_{j = 1}^d E_{S_j})\right]^k.
\end{align*}

Now by the inclusion-exclusion principle,
\begin{equation}\label{eq:iep}
    \sum_{j = 1}^d \P(E_{S_j}) - \sum_{1 \le j < j' \le d} \P(E_{S_j} \cap E_{S_{j'}}) \le \P(\cup_{j = 1}^d E_{S_j}) \le \sum_{j = 1}^d \P(E_{S_j}).
\end{equation}

We note that collecting both $S_j$ and $S_{j'}$ amounts to collecting their union, which has size $2s - \ell$, where $\ell = |S_{j} \cap S_{j'}|$. Consider $S_{j} \ne S_{j'}$ such that $|S_{j} \cap S_{j'}| = \ell$. Note that $\ell \le s - 2$. Also, since the union of $S_{j}$ and $S_{j'}$ has size $2s - \ell$, we must have that $2s - \ell \le r$, i.e. $\ell \ge \max\{0, 2s - r\}$. It follows that
\begin{align*}
    \P(E_{S_{j}} \cap E_{S_{j'}}) = \frac{\binom{n - 2s + \ell}{r - 2s + \ell}}{\binom{n}{r}} &= \frac{ \frac{n^{n - 2s + \ell}}{(r - 2s + \ell)!}}{\frac{n^r}{r^r}} \\
    &\le \frac{r^r}{\max\{r - 2s, 0\}!}n^{-2s + \ell} \\
    &\le \frac{r^r}{\max\{r - 2s, 0\}!}n^{-(s + 2)} = \psi.
\end{align*}
Using this we get from \eqref{eq:iep} that
\[
    d\theta - \binom{d}{2} \psi \le \P(\cup_{j = 1}^d E_{S_j}) \le d\theta.
\]
Hence
\begin{equation}\label{eq:prod_ind_bounds}
    (1 - d\theta)^k \le \E \bigg[\prod_{j=1}^d I_{S_j}^{(k)}\bigg] \leq \bigg(1 - d \theta + \binom{d}{2}\psi\bigg)^k.
\end{equation}
Plugging this into \eqref{eq:dual-bd-dth-mom} and using \eqref{eq:cdd} and \eqref{eq:cdl}, we get the desired estimates \eqref{eq:mom_d}.
\end{proof}

The following lemma is needed to estimate $(1 - 2\theta + \psi)^k$ for $k = O(n^s \log n)$, which will be used in the proof of Theorem~\ref{thm:gccp}.
\begin{lemma} \label{lem:asym}
Let $a_n, b_n$ be sequences converging to 0 such that $ b_n = o\big(\frac{a_n}{-\log a_n}\big)$. Let $k_n$ be a sequence such that, $k_n = O\big(\frac{1}{a_n} \log \frac{1}{a_{n}}\big)$. Then $(1 - a_n + b_n)^{k_n} = (1 - a_n)^{k_n} \big(1 + O\big(\frac{b_n}{a_n}\log \frac{1}{a_n}\big)\big)$.
\end{lemma}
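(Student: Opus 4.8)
The plan is to reduce the statement to a first-order Taylor estimate of the logarithm. Writing
\[
    (1 - a_n + b_n)^{k_n} = (1 - a_n)^{k_n} \cdot \bigg(\frac{1 - a_n + b_n}{1 - a_n}\bigg)^{k_n}
    = (1 - a_n)^{k_n} \cdot \bigg(1 + \frac{b_n}{1 - a_n}\bigg)^{k_n},
\]
it suffices to show that the second factor equals $1 + O\big(\frac{b_n}{a_n}\log\frac{1}{a_n}\big)$. Set $c_n := \frac{b_n}{1 - a_n}$; since $a_n \to 0$ we have $c_n = b_n(1 + o(1))$, so $c_n \to 0$ and $c_n$ has the same order as $b_n$. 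Taking logarithms, $\log\big(1 + c_n\big)^{k_n} = k_n \log(1 + c_n) = k_n c_n (1 + o(1))$ using $\log(1+x) = x + O(x^2)$ for $x \to 0$.

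Next I would bound $k_n c_n$. By hypothesis $k_n = O\big(\frac{1}{a_n}\log\frac{1}{a_n}\big)$, so
\[
    k_n c_n = O\bigg(\frac{b_n}{a_n}\log\frac{1}{a_n}\bigg).
\]
The crucial point is that this quantity tends to $0$: the assumption $b_n = o\big(\frac{a_n}{-\log a_n}\big)$ is exactly the statement that $\frac{b_n}{a_n}\log\frac{1}{a_n} = \frac{b_n}{a_n}(-\log a_n) \to 0$ (note $-\log a_n = \log\frac1{a_n} > 0$ eventually since $a_n \to 0$). Hence $k_n c_n \to 0$, and therefore $k_n \log(1 + c_n) = O\big(\frac{b_n}{a_n}\log\frac1{a_n}\big)$ as well. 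Exponentiating and using $e^x = 1 + O(x)$ for $x \to 0$ gives $\big(1 + c_n\big)^{k_n} = 1 + O\big(\frac{b_n}{a_n}\log\frac1{a_n}\big)$, which is what we wanted.

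I do not anticipate a genuine obstacle here; the lemma is essentially bookkeeping about which error terms are negligible. The one place to be careful is to confirm that the error term $O\big(\frac{b_n}{a_n}\log\frac1{a_n}\big)$ is indeed $o(1)$ before invoking $e^x = 1 + O(x)$ and $\log(1+x) = x + O(x^2)$ — this is precisely guaranteed by the hypothesis on $b_n$, and it is worth stating explicitly so that the final $O(\cdot)$ in the conclusion is meaningful (i.e.\ it multiplies $(1-a_n)^{k_n}$ to produce a genuine asymptotic equivalence rather than a vacuous bound). A secondary minor point is to absorb the $(1 + o(1))$ factors coming from $c_n = b_n(1+o(1))$ and from the Taylor expansions into the single $O$-term, which is immediate since $o(1) \cdot O\big(\frac{b_n}{a_n}\log\frac1{a_n}\big) = O\big(\frac{b_n}{a_n}\log\frac1{a_n}\big)$.
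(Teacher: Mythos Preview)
Your proof is correct and follows essentially the same route as the paper: factor out $(1-a_n)^{k_n}$, take logarithms of the remaining ratio, use $k_n = O\big(\frac{1}{a_n}\log\frac{1}{a_n}\big)$ to bound $k_n\log\big(1+\frac{b_n}{1-a_n}\big) = O\big(\frac{b_n}{a_n}\log\frac{1}{a_n}\big)$, and exponentiate. If anything, you are slightly more explicit than the paper in verifying that this $O$-term is $o(1)$ before applying $e^x = 1 + O(x)$.
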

\begin{proof}
We note that
\begin{align*}
    \log \frac{(1 - a_n + b_n)^{k_n}}{(1 - a_n)^{k_n}} &= k_n \log \bigg(1 + \frac{b_n}{1 - a_n}\bigg) \\
    &=O\bigg(\frac{1}{a_n} \log \bigg(\frac{1}{a_n} \bigg) (-b_n + o(b_n))\bigg) \\
    &= O\bigg(\frac{b_n}{a_n}\log \frac{1}{a_n}\bigg).
\end{align*}
Exponentiating both sides we get the desired result.
\end{proof}

\begin{corollary}\label{cor:asym}
    We have for $k = O(n^s \log n)$ that $(1 - 2\theta + \psi)^k = (1 - \theta)^{2k}(1 + O(\frac{\log n}{n^{2 \wedge s}})).$
\end{corollary}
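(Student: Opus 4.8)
The plan is to deduce the corollary from Lemma~\ref{lem:asym} applied twice, together with the elementary size estimates
\[
    \theta = \frac{\binom{r}{s}}{\binom{n}{s}} = \Theta(n^{-s}), \qquad \psi = \frac{r^r n^{-(s+2)}}{\max\{r-2s,0\}!} = \Theta(n^{-(s+2)}),
\]
so that in particular $\psi/\theta = \Theta(n^{-2})$ and $-\log\theta \sim s\log n$. (Here $r,s$ are fixed, so all constants are allowed to depend on them.) These are all that is needed to verify the hypotheses of Lemma~\ref{lem:asym} in what follows.

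First I would take $a_n = 2\theta$ and $b_n = \psi$. Then $a_n, b_n \to 0$; the condition $b_n = o\big(\tfrac{a_n}{-\log a_n}\big)$ holds because $\psi \big/ \tfrac{2\theta}{-\log(2\theta)} \asymp \tfrac{\log n}{n^2} \to 0$; and the condition $k = O\big(\tfrac{1}{a_n}\log\tfrac{1}{a_n}\big)$ is precisely the hypothesis $k = O(n^s\log n)$, since $\tfrac{1}{2\theta}\log\tfrac{1}{2\theta} = \Theta(n^s\log n)$. Lemma~\ref{lem:asym} then gives
\[
    (1 - 2\theta + \psi)^k = (1 - 2\theta)^k\Big(1 + O\big(\tfrac{\psi}{\theta}\log\tfrac{1}{\theta}\big)\Big) = (1 - 2\theta)^k\Big(1 + O\big(\tfrac{\log n}{n^2}\big)\Big).
\]
It remains to replace $(1-2\theta)^k$ by $(1-\theta)^{2k} = (1 - 2\theta + \theta^2)^k$; this is not automatic, since $k\theta^2$ need not be negligible, but it is again an instance of Lemma~\ref{lem:asym}, now with $a_n = 2\theta$ and $b_n = \theta^2$, the only new verification being $\theta^2 \big/ \tfrac{2\theta}{-\log(2\theta)} \asymp \tfrac{\log n}{n^s} \to 0$. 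This yields $(1-\theta)^{2k} = (1-2\theta)^k\big(1 + O(\theta\log\tfrac1\theta)\big) = (1-2\theta)^k\big(1 + O(\tfrac{\log n}{n^s})\big)$, equivalently $(1-2\theta)^k = (1-\theta)^{2k}\big(1 + O(\tfrac{\log n}{n^s})\big)$.

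Combining the two relations,
\[
    (1 - 2\theta + \psi)^k = (1 - \theta)^{2k}\Big(1 + O\big(\tfrac{\log n}{n^s}\big)\Big)\Big(1 + O\big(\tfrac{\log n}{n^2}\big)\Big) = (1 - \theta)^{2k}\Big(1 + O\big(\tfrac{\log n}{n^{2\wedge s}}\big)\Big),
\]
since the product of the two error factors contributes an error of order $\max\{\tfrac{\log n}{n^2}, \tfrac{\log n}{n^s}\} = \tfrac{\log n}{n^{2\wedge s}}$, as desired. There is no serious obstacle in this argument; the only points requiring care are checking that both applications of Lemma~\ref{lem:asym} are legitimate (in particular the little-$o$ hypothesis on $b_n$) and not overlooking the second step, as the naive identification $(1-2\theta)^k \approx (1-\theta)^{2k}$ is exactly where the $n^{-s}$ part of the error arises — this is why the bound weakens to $n^{-(2\wedge s)}$ rather than $n^{-2}$ when $s = 1$.
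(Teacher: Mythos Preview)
Your proof is correct and follows essentially the same approach as the paper: two applications of Lemma~\ref{lem:asym}, first with $a_n = 2\theta$, $b_n = \psi$ to compare $(1-2\theta+\psi)^k$ with $(1-2\theta)^k$, and then with $a_n = 2\theta$, $b_n = \theta^2$ to compare $(1-2\theta)^k$ with $(1-\theta)^{2k}$. Your write-up is in fact more careful than the paper's in verifying the hypotheses of the lemma and in explaining how the two error terms combine to give $O(\log n / n^{2\wedge s})$.
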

\begin{proof}
    Since $\psi = \Theta(n^{-s - 2}) \ll \theta = \Theta(n^{-s})$, we have $(1 - 2\theta + \psi)^k = (1 - 2\theta)^k(1 + O(\frac{\log n}{n^2}))$ by taking $a_n = 2\theta$ and $b_n = \psi$ in Lemma~\ref{lem:asym}. On the other hand, by taking $a_n = 2\theta$ and $b_n = \theta^2$ in Lemma~\ref{lem:asym}, we have that $(1 - \theta)^{2k} = (1 - 2\theta)^k (1 + O(\frac{\log n}{n^s}))$.
\end{proof}
We are now ready to prove our main results.
\begin{proof}[\textbf{Proof of Theorem~\ref{thm:gccp}}]
Set $M = \big\lceil \frac{n^s \log n}{(s - 1)! \binom{r}{s}} \big\rceil$. We then have
\begin{align*}
    \E T^{(r, s)} &= \sum_{k \ge 0} \P(T^{(r, s)} > k) \\
           &= \sum_{k \ge 0} \P(N_k > 0) \\
           &= \sum_{0 \le k < M} \P(N_k > 0) + \sum_{k \ge M} \P(N_k > 0).
\end{align*}
\textbf{Upper bound:}
We bound the summands in the second sum using the first moment method and \eqref{eq:mom_1}:
\begin{align*}
    \P(N_k > 0) \le \E N_k = m (1 - \theta)^k,
\end{align*}
where $\theta = \frac{\binom{n - s}{r - s}}{\binom{n}{r}} = \frac{s! \binom{r}{s}}{n^s}(1 + o(1))$. Therefore
\begin{align}
    \sum_{k \ge M} \P(N_k > 0) &\le \sum_{k \ge M} m (1 - \theta)^k \nonumber\\
                               &= \frac{m (1 - \theta)^M}{\theta} \nonumber\\
                               &\le \frac{\binom{n}{s} O(n^{-s})}{\frac{s! \binom{r}{s}}{n^s}(1 + o(1))} \nonumber\\
                               &= O(n^s), \label{eq:st}
\end{align}
where in the third line we have used the fact that
\begin{equation}\label{eq:lot}
    (1 - \theta)^M = \bigg(1 - \frac{s! \binom{r}{s}}{n^s}(1 + o(1))\bigg)^{\frac{n^s \log n}{(s - 1)! \binom{r}{s}}} = O(e^{-s \log n}) = O(n^{-s}).
\end{equation}
From \eqref{eq:st} and \eqref{eq:lot} we have
\[
    \E T^{(k, s)} = \sum_{0 \le k < M} \P(N_k > 0) + O(n^s) \le \frac{n^s \log n }{(s - 1)! \binom{r}{s}} + O(n^s),
\]
where the second inequality follows trivially by upper-bounding $\P(N_k > 0)$ by $1$.

\noindent
\textbf{Lower bound:} Since
\begin{align*}
    \E T^{(r, s)} &\geq \sum_{0 \le k < M} \P(N_k > 0),
\end{align*}
it suffices to lower bound $\sum_{0 \le k < M} \P(N_k > 0)$. Note that $N_k \ge N_{k + 1}$, so that $\P(N_k > 0)$ is decreasing in $k$. Therefore, with $0 < \delta_n < \frac{1}{s!\binom{r}{s}}$, and $k_n = \lfloor \big(\frac{1}{(s - 1)! \binom{r}{s}} - \delta_n\big) n^s \log n \rfloor$, we have
\begin{equation}\label{eq:fstbound}
    \sum_{0 \le k < M} \P(N_k > 0) \ge \sum_{0 \le k \le k_n} \P(N_k > 0) \ge \bigg(\frac{1}{(s - 1)! \binom{r}{s}} - \delta_n\bigg) n^s \log n \, \P(N_{k_n} > 0).
\end{equation}
We will suitably choose $\delta_n \rightarrow 0$ later. We now use the second moment method to lower bound $\P(N_{k_n} > 0)$:
\[
    \P(N_{k_n} > 0) \ge \frac{(\E N_{k_n})^2}{\E N_{k_n}^2} = \frac{(\E N_{k_n})^2}{\var(N_{k_n}) + (\E(N_{k_n}))^2} = \frac{1}{1 + \frac{\var(N_{k_n})}{(\E N_{k_n})^2}}.
\]
Let $\gamma_n = (1 - \theta)^{k_n}$. Using the upper bound in \eqref{eq:mom_d} for $d = 2$ and Corollary~\ref{cor:asym}, we get that
\begin{align*}
     \frac{\var(N_{k_n})}{(\E N_{k_n})^2} &\le \frac{4 m(1 - \theta)^{k_n} + m(m - 1) (1 - 2\theta + \psi)^{k_n} - m^2(1 - \theta)^{2{k_n}}}{m^2(1 - \theta)^{2{k_n}}} \\
     &= \frac{4 m \gamma_n + m(m - 1) \gamma_n^2 - m^2 \gamma_n^2 + m^2\gamma_n^2 O(\frac{\log n}{n^{2 \wedge s}})}{m^2 \gamma_n^2} \\
     &= \frac{4 - \gamma_n}{m\gamma_n} + O\bigg(\frac{\log n}{n^{2 \wedge s}}\bigg)\\
     &=O\bigg(\frac{1}{m\gamma_n}\bigg),
\end{align*}
where in the last line we are using the fact that $\delta_n \to 0$ so that
\[
    \frac{1}{m\gamma_n} = \Theta(n^{-s!\binom{r}{s}\delta_n}) = n^{-o(1)} \gg \frac{\log n}{n^{2 \wedge s}}.
\]
Therefore
\begin{align*}
     \P(N_{k_n} > 0) &\ge \frac{1}{1+ O(\frac{1}{m\gamma_n})} \\
     &= 1 - O(n^{-s!\binom{r}{s}\delta_n}).
\end{align*}
Substituting the above lower bound in \eqref{eq:fstbound}, we get that
\[
    \sum_{0 \le k < M} \P(N_k > 0) \ge \bigg(\frac{1}{(s - 1)!\binom{r}{s}} - \delta_n\bigg) n^s \log n - O(n^{s - s!\binom{r}{s} \delta_n} \log n).
\]
We now choose $\delta_n = \frac{\log \log n}{s!\binom{r}{s}\log n}$, for which $n^{s!\binom{r}{s}\delta_n} = \log n$, so that
\begin{align*}
    \sum_{0 \le k < M} \P(N_k > 0) &\ge \bigg(\frac{1}{(s - 1)!\binom{r}{s}} - \delta_n\bigg) n^s \log n - O(n^{s}) \\
    &= \frac{n^s \log n }{(s - 1)! \binom{r}{s}} + O(n^s \log \log n).
\end{align*}
Combining the upper and the lower bounds we get that
\[
    \E T^{(k, s)} = \frac{n^s \log n }{(s - 1)! \binom{r}{s}} + O(n^s \log \log n).
\]
This completes the proof.
\end{proof}

We now present the proof of Theorem~\ref{thm:alpha-gccp}, which uses higher order moments.
\begin{proof}[\textbf{Proof of Theorem~\ref{thm:alpha-gccp}}]
We note that for any fixed $d \ge 1$ and $k = O(n^s)$, we have from Lemmas~\ref{lem:mom} and \ref{lem:asym} that
\begin{equation}\label{eq:mom_d_asymp}
    \E N_k^{d} = d ! \binom{m}{d} (1 - d \theta)^k (1 + o(1)).
\end{equation}
Now
\begin{align*}
    \E T^{(r, s)}_{\alpha} &= \sum_{k \ge 0} \P(T^{(r, s)}_{\alpha} > k) \\
    &= \sum_{k \ge 0} \P(N_k > \alpha m).
\end{align*}

\noindent
\textbf{Upper bound:}
Suppose $\varrho > 0$ and $M = \varrho n^s$. Using Markov's inequality, we have the estimate
\begin{align*}
    \sum_{k \ge M} \P(N_k > \alpha m) &\le (\alpha m)^{-d} \sum_{k \ge M} \E N_k^d \\ &= (\alpha m)^{-d} \bigg[\frac{d! \binom{n}{d}(1 - d\theta)^M}{d\theta} (1 + o(1)) \bigg] \\
    &= \frac{1}{d\alpha^d A_{r, s}} \exp(-d A_{r, s}\varrho) n^s (1 + o(1)),
\end{align*}
where $A_{r, s} = s! \binom{r}{s}$. Therefore
\[
    \E T^{(r, s)}_{\alpha} \le \bigg[\underbrace{\varrho + \frac{1}{d\alpha^d A_{r, s}} \exp(-d A_{r, s}\varrho)}_{=: G_d(\varrho)}\bigg] n^s (1 + o(1)).
\]
Note that $G_{d}(\varrho)$ is minimised at $\varrho^* = \frac{1}{A_{r, s}}\log(1 / \alpha)$ and the minimum value is
\[
    G_d(\varrho^*) = \frac{1}{A_{r, s}} \log\bigg(\frac{1}{\alpha}\bigg) + \frac{1}{d A_{r, s}}.
\]
Therefore
\begin{equation}\label{eq:alpha-gccp-ubd}
    \E T^{(r, s)}_{\alpha} \le \bigg[\frac{1}{A_{r, s}} \log\bigg(\frac{1}{\alpha}\bigg) + \frac{1}{d A_{r, s}}\bigg] n^s (1 + o(1)).
\end{equation}

\noindent
\textbf{Lower bound:}
We will now prove a lower bound on $\E T^{(r, s)}_{\alpha}$ for which we will use the general Paley-Zygmund inequality: If $Z \ge 0$ is a random variable with finite variance and $0 \le \epsilon \le 1$, then
\[
    \P(Z > \epsilon \E Z) \ge (1 - \epsilon)^2 \frac{(\E Z)^2}{\E Z^2}.
\]
Using \eqref{eq:mom_d_asymp} together with the Paley-Zygmund inequality, we get that
\begin{align*}
    \P(N_k > \alpha m) &=
    \P(N_k^{d} > \alpha^d m^d) \\
    &= \P\bigg(N_k^d > \frac{\alpha^d m^d}{\E N_k^d}\E N_k^d\bigg) \\
    &\ge \bigg(1 - \frac{\alpha^d m^d}{\E N_k^d}\bigg)^2\frac{(\E N_k^d)^2}{\E N_k^{2d}} \\
    &= \bigg(1 - \frac{\alpha^d}{(1 - d\theta)^k}\bigg)^2 (1 + o(1)),
\end{align*}
where we have used the fact that
\[
    \frac{(\E N_k^d)^2}{\E N_k^{2d}} = \frac{(d! \binom{m}{d} (1 - d\theta)^k)^2}{(2d)! \binom{m}{2d} (1 - 2d \theta)^k}(1 + o(1)) = \frac{(1 - d\theta)^{2k}}{(1 - 2d\theta)^{k}}(1 + o(1)) = 1 + o(1),
\]
where the first equality is a consequence of \eqref{eq:mom_d_asymp} and the last equality follows from an application of Lemma~\ref{lem:asym} (with $a_n = 2\theta$ and $b_n = \theta^2$).
Hence for $M' = \varpi n^s$,
\begin{align*}
    \sum_{1 \le k \le M'} \P(N_k > \alpha m) &\ge M' \bigg(1 - \frac{\alpha^d}{(1 - d\theta)^{M'}}\bigg)^2 (1 + o(1)) \\
    &= \underbrace{\varpi (1 - \alpha^d \exp(d A_{r, s}\varpi))^2}_{=: F_d(\varpi)} n^s (1 + o(1)).
\end{align*}
Clearly, this bound is non-trivial for $0 \le \varpi \le \frac{1}{A_{r, s}} \log(1 / \alpha)$.
Reparameterising $\varpi = \frac{1}{A_{r, s}} (\log(1/\alpha) - u)$, where $u \in [0, \log(1/\alpha)]$, we have
\[
    F_{d}(\varpi) = \frac{1}{A_{r, s}} (\log(1/\alpha) - u) (1 - \exp(-d u))^2 =: H_{d}(u),
\]
which is clearly smaller than $\frac{1}{A_{r, s}}\log(1 / \alpha)$ for any $u \in [0, \log(1/\alpha)]$. Putting $u = \frac{\log d}{d}$, we get
\[
    H_{d}\bigg(\frac{\log d}{d}\bigg) = \frac{1}{A_{r, s}} \bigg[\log\bigg(\frac{1}{\alpha}\bigg) - \frac{\log d}{d}\bigg] \bigg(1 - \frac{1}{d}\bigg)^2.
\]
Thus for any large enough integer $d$ such that $\frac{\log d}{d} \in [0, \log(1/\alpha)]$, we have
\begin{equation}\label{eq:alpha-gccp-lbd}
    \E T^{(r, s)}_{\alpha} \ge \frac{1}{A_{r, s}}\bigg[\log\bigg(\frac{1}{\alpha}\bigg) - \frac{\log d}{d}\bigg] \bigg(1 - \frac{1}{d}\bigg)^2 n^s (1 + o(1)).
\end{equation}
From \eqref{eq:alpha-gccp-ubd} and \eqref{eq:alpha-gccp-lbd}, we conclude (by first letting $n \to \infty$ and then letting $d \to \infty$) that
\[
    \lim_{n \to \infty}\frac{\E T_{\alpha}^{(r, s)}}{n^s} = \frac{1}{A_{r, s}}\log\bigg(\frac{1}{\alpha}\bigg).
\]
This completes the proof.
\end{proof}

\begin{proof}[\textbf{Proof of Theorem~\ref{thm:gumbel}}]
Let $X_n = \frac{T^{(r, s)} - \binom{n}{s} \log \binom{n}{s} / \binom{r}{s}}{\binom{n}{s} / \binom{r}{s}}$. Let $k = \bigg \lfloor \frac{\binom{n}{s}\big(\log \binom{n}{s} + x\big)}{\binom{r}{s}} \bigg \rfloor = \frac{n^s \big(\log \binom{n}{s} + x\big)}{s! \binom{r}{s}} (1 + o(1))$. We then have
\[
    \P(X_n \le x) = \P(T^{(r, s)} \le k) = \P(N_k = 0).
\]
Thus it is enough to show that $N_k \xrightarrow{d} \poi(e^{-x})$. To that end, we will show that the moments $\E N_k^d$, $d \ge 1$, converge to the corresponding moments of a $\poi(e^{-x})$ random variable.

We begin with the expression
\[
    \E N_k^d = \bigg(\sum_{S \in \cS} I_{S}^{(k)}\bigg)^d = \sum_{l = 1}^d \binom{m}{l} C_{d, l} \, \E \bigg[\prod_{j = 1}^l I_{S_j}^{(k)}\bigg]
\]
which was proved in \eqref{eq:prod_formula}. If the indicators $(I_S^{(k)})_{S \in \cS}$ were i.i.d. $\bern(e^{-x}/m)$ variables, then $N_k = \sum_{S \in \cS} I_{S}^{(k)}$ would have a $\bino(m, \frac{e^{-x}}{m})$ distribution which converges weakly to $\poi(e^{-x})$. Also, in that case, we would have
\[
    \E \bigg[\prod_{j = 1}^l I_{S_j}^{(k)}\bigg] = \prod_{j = 1}^l \E I_{S_j}^{(k)} = \frac{e^{-lx}}{m^l}.
\]
Thus, in this hypothetical scenario,
\[
    \E N_k^d \rightarrow \sum_{l = 1}^d \frac{C_{d, l}}{l!} e^{-lx}.
\]
This is the $d$-th moment of a $\poi(e^{-x})$ variable.

In the light of the above computation, we need to show that $\E N_k^d \rightarrow \sum_{l = 1}^d \frac{C_{d, l}}{l!} e^{-lx}$ for all $d \ge 1$. In fact, it suffices to show that
\begin{equation}\label{eq:prod_ind}
    \E \bigg[\prod_{j = 1}^l I_{S_j}^{(k)}\bigg] = \frac{e^{-lx}}{m^l} (1 + o(1)).
\end{equation}
To that end, we recall the estimate in \eqref{eq:prod_ind_bounds}:
\[
    (1 - l\theta)^k \le \E \bigg[\prod_{j = 1}^l I_{S_j}^{(k)}\bigg] \leq \bigg(1 - l \theta + \binom{l}{2}\psi\bigg)^k.
\]
Now, note that
\begin{align*}
    (1 - l\theta)^k &= \bigg(1 - \frac{s!\binom{r}{s}}{n^s}(1 + o(1))\bigg)^k \\
    &= \exp(-l(\log m + x)(1 + o(1))) \\
    &= \frac{e^{-lx}}{m^l} (1 + o(1)).
\end{align*}
Since $\psi = O(n^{-(s + 2)})$, we also have that
\begin{align*}
    \bigg(1 - l \theta + \binom{l}{2}\psi\bigg)^k
    &= (1 - l\theta + O(\psi))^k \\
    &= \exp(-(l(\log m + x) (1 + o(1)) - O(\log n /n^2))) \\
    &= \exp(-l(\log m + x)(1 + o(1))) \\
    &= \frac{e^{-lx}}{m^l} (1 + o(1)).
\end{align*}
This proves \eqref{eq:prod_ind} and we are done.
\end{proof}

We will now prove Theorem~\ref{thm:rw_r=s}. We first recall a couple of results from the theory of random walk on finite graphs. Let $G = (V, E)$ be a finite connected graph. Let $H(u, v)$ be the hitting time of a vertex $v$ for the random walk started at vertex $u$, i.e. the expected number of steps taken by the random walk before vertex $v$ is reached. Also, let $t_{\mathrm{cov}}^{(u)}(G)$ denote the cover time of $G$ (i.e. the expected number of steps to reach every vertex) for a random walk starting at vertex $u$. Then we have the following result (see, e.g., Theorem 2.7 in \cite{lovasz1993random}).
\begin{theorem}[Matthews' theorem]
    For any $w \in V$, we have
\[
    \bigg(1 + \frac{1}{2} + \cdots + \frac{1}{|V|}\bigg) \min_{u, v} H(u, v) \le t_{\mathrm{cov}}^{(w)}(G) \le \bigg(1 + \frac{1}{2} + \cdots + \frac{1}{|V|}\bigg) \max_{u, v} H(u, v).
\]
\end{theorem}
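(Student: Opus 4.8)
The plan is to follow Matthews' coupling argument: run the walk alongside an independent, uniformly random ordering of the vertex set and telescope the cover time along that ordering. Write $n = |V|$, fix the start vertex $w$, let $(X_t)_{t \ge 0}$ be the walk and $\tau_v$ the first hitting time of $v$, and let $(\sigma_1, \dots, \sigma_n)$ be a uniformly random permutation of $V$, independent of $(X_t)$. For $0 \le k \le n$ set $L_k = \max_{j \le k} \tau_{\sigma_j}$, the first time all of $\sigma_1, \dots, \sigma_k$ have been visited; then $L_0 = 0$, $L_n$ is the cover time, and $t_{\mathrm{cov}}^{(w)}(G) = \E L_n = \sum_{k=1}^n \E(L_k - L_{k-1})$. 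Everything reduces to showing that, for $2 \le k \le n$, the increment $\E(L_k - L_{k-1})$ lies between $h_{\min}/k$ and $h_{\max}/k$, where $h_{\min} = \min_{u \ne v} H(u,v)$ and $h_{\max} = \max_{u,v} H(u,v)$, together with a harmless correction at $k = 1$; summing then produces the factor $1 + \frac12 + \dots + \frac1n$ on both sides.

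Two ingredients drive the increment estimate. The first is combinatorial: $L_k > L_{k-1}$ if and only if $\sigma_k$ is the last of $\sigma_1, \dots, \sigma_k$ to be visited by the walk, and --- conditioning on the trajectory, which fixes the pairwise distinct first-visit times, and using that the element $\sigma_k$ is uniform over the unordered set $\{\sigma_1, \dots, \sigma_k\}$ --- this event $A_k$ has probability exactly $1/k$, regardless of the walk. The second is dynamical: for $k \ge 2$, on $A_k$ the walk at time $L_{k-1}$ is physically located at the vertex $U$ visited last among $\sigma_1, \dots, \sigma_{k-1}$ (a vertex's first-visit time is a time the walk is there), and $\sigma_k$ is still unvisited, so $L_k - L_{k-1}$ is exactly the time for the walk, restarted from $U$, to reach $\sigma_k$; its conditional expectation is thus $H(U, \sigma_k)$, and since $U \ne \sigma_k$ this lies in $[h_{\min}, h_{\max}]$. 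Hence $\E(L_k - L_{k-1}) = \P(A_k)\,\E[H(U, \sigma_k) \mid A_k] \in [h_{\min}/k, h_{\max}/k]$. For $k = 1$ one has $\E L_1 = \E H(w, \sigma_1)$, which carries a factor $1 - 1/n$ coming from the event $\sigma_1 = w$; this shifts the final lower bound down by at most $h_{\min}/n$, which is negligible and immaterial for the paper's application, where $h_{\min} = h_{\max}(1 + o(1))$.

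The step demanding care --- and the main obstacle --- is the \emph{restart from $U$} claim, because $L_{k-1}$ is not a stopping time for the walk's own filtration: it depends on $\sigma_1, \dots, \sigma_{k-1}$. The fix is to pass to the enlarged filtration $\mathcal{G}^{(k-1)}_t = \sigma(\sigma_1, \dots, \sigma_{k-1}, X_0, \dots, X_t)$; since the ordering is independent of the walk, $(X_t)$ is still a Markov chain for $(\mathcal{G}^{(k-1)}_t)$ and $L_{k-1} = \max_{j \le k-1}\tau_{\sigma_j}$ is now a genuine stopping time, so the strong Markov property applies at $L_{k-1}$ and gives $\E[L_k - L_{k-1} \mid \mathcal{G}^{(k-1)}_{L_{k-1}}, \sigma_k] = H(X_{L_{k-1}}, \sigma_k)$ on $A_k$ (and $0$ off it). One must still check that conditioning additionally on the value of $\sigma_k$ does not perturb the post-$L_{k-1}$ law of the walk --- which holds because, given $\sigma_1, \dots, \sigma_{k-1}$, the vertex $\sigma_k$ is independent of the entire walk --- and that $A_k = \{\sigma_k \text{ not visited by } L_{k-1}\}$ is measurable with respect to $\mathcal{G}^{(k-1)}_{L_{k-1}}$ together with $\sigma_k$. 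With this bookkeeping done, the two ingredients combine at once, and summing over $k$ against $\sum_{k=1}^n 1/k$ delivers both inequalities of the theorem.
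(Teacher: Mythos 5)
The paper states Matthews' theorem as a \emph{cited} result (``see, e.g., Theorem 2.7 in \cite{lovasz1993random}'') and supplies no proof of its own, so there is nothing in the source to compare your argument against. Your proof is the standard one: Matthews' random-permutation decomposition $t_{\mathrm{cov}}^{(w)} = \sum_k \E(L_k - L_{k-1})$, the observation that $\P(A_k) = 1/k$ because the permutation is independent of the (distinct) first-visit times, and the strong Markov property applied at the random time $L_{k-1}$. You correctly identify and repair the one step that needs care: $L_{k-1}$ is not a stopping time for the walk's own filtration, but becomes one after adjoining $\sigma_1,\dots,\sigma_{k-1}$, and both the Markov property and the needed independence of $\sigma_k$ from the post-$L_{k-1}$ trajectory survive the enlargement because the permutation is independent of the walk. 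The argument is correct.

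Two small remarks. As you yourself note, the $k=1$ term only contributes $\left(1 - \tfrac{1}{n}\right)\min_{u\ne v}H(u,v)$, so your proof yields the lower bound $\min_{u\ne v}H(u,v)\left(1 + \tfrac12 + \cdots + \tfrac{1}{n-1}\right)$, one harmonic term short of the displayed constant; this is in fact the form of Matthews' lower bound in most standard references, and the discrepancy is immaterial for the paper's application, where $\min$ and $\max$ hitting times agree to leading order. Also, the theorem's $\min_{u,v}H(u,v)$ must be read over distinct pairs $u\ne v$ (otherwise it equals $0$ and the lower bound is vacuous), which your proof assumes implicitly.
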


We also need a recursive formula for the hitting times $H(u, v)$ (which can be obtained by a first-step analysis of the random walk, see, e.g., \cite[pp. 15]{lovasz1993random}).
\begin{proposition}[Recursion of hitting times]
For any $u, v \in  V$
\begin{equation}\label{eq:hittm}
    H(u, v) = 1 + \frac{1}{d_u}\sum_{w \,:\, (u, w) \in E} H(w, v),
\end{equation}
where $d_u$ is the degree of vertex $u$.
\end{proposition}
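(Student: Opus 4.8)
The plan is to obtain \eqref{eq:hittm} by a one-step analysis, i.e.\ by conditioning on the first move of the walk. Fix the target $v$, write $(X_t)_{t \ge 0}$ for the random walk and $\tau_v = \inf\{t \ge 0 : X_t = v\}$ for its first hitting time of $v$, so that $H(u, v) = \E[\tau_v \mid X_0 = u]$. The one preliminary I would establish is that $H(u, v) < \infty$ for every $u$. Since $G$ is finite and connected, from any vertex there is a path to $v$ of length at most $|V| - 1$, and each step along it is taken with probability at least $1/\Delta$, where $\Delta = \max_w d_w$; hence $\P(\tau_v \le |V| - 1 \mid X_0 = u) \ge \Delta^{-(|V| - 1)} =: p > 0$ uniformly in $u$. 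Applying the Markov property at the times $|V| - 1, 2(|V| - 1), \dots$ shows $\P(\tau_v > k(|V| - 1) \mid X_0 = u) \le (1 - p)^k$, so $\tau_v$ has a geometric tail and $H(u, v) < \infty$.

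With finiteness in hand, fix $u \ne v$ and condition on $X_1$. The first step goes to a uniformly random neighbour of $u$, so $\P(X_1 = w \mid X_0 = u) = 1/d_u$ for each $w$ with $(u, w) \in E$; and since $u \ne v$, we have $\tau_v \ge 1$, so $\tau_v = 1 + \tau'$, where $\tau'$ is the first hitting time of $v$ by the post-first-step trajectory. By the Markov property, conditionally on $X_1 = w$ that trajectory is a random walk started at $w$, so $\E[\tau' \mid X_1 = w] = H(w, v)$ — this is where the finiteness of $H$ is used, to split the expectation of $\tau_v$ into the constant $1$ plus the expectation of $\tau'$. Taking the total expectation over $w$ then gives
\[
    H(u, v) = \sum_{w \,:\, (u, w) \in E} \frac{1}{d_u}\bigl(1 + H(w, v)\bigr) = 1 + \frac{1}{d_u} \sum_{w \,:\, (u, w) \in E} H(w, v),
\]
which is precisely \eqref{eq:hittm}. (For $u = v$ one uses the convention $H(v, v) = 0$; the identity as written is meant for $u \ne v$, which is the only case invoked in Matthews' bound above.)

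I do not expect a genuine obstacle in this argument: its entire content is the elementary first-step decomposition together with the Markov property of the walk. The only place that calls for a line of justification — and hence the ``hardest'' step — is the a priori bound $H(u, v) < \infty$, since without it the manipulation $\E[\tau_v \mid X_0 = u] = 1 + \E[\tau' \mid X_0 = u]$ would not be licensed; the geometric tail estimate above supplies exactly what is needed.
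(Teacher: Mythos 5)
Your proof is correct and uses exactly the approach the paper alludes to — the paper does not prove this proposition but simply remarks that it "can be obtained by a first-step analysis of the random walk" and cites Lov\'asz's survey, which is precisely the one-step conditioning you carry out. Your extra care in first establishing $H(u,v) < \infty$ via the geometric tail bound is a sensible bit of rigour that the paper leaves implicit.
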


\begin{proof}[\textbf{Proof of Theorem~\ref{thm:rw_r=s}}]
Consider the graph $G$ each of whose vertices is a super-coupon of size $r$ and two vertices are connected by an edge if they share all but a single coupon. Thus $G$ is an $r(n - r)$-regular graph on $m = \binom{n}{r}$ vertices. It is not hard to see that $\E T_{\mathrm{RW}}^{(r, r)} - 1$ is precisely the cover time (i.e. the expected time to visit all the vertices) of a random walk on $G$ started at a uniformly random vertex. Now, by symmetry, $t_{\mathrm{cov}}^{(u)}(G)$ does not depend on $u$. Denoting the common value by $t_{\mathrm{cov}}(G)$, we have that $\E T_{\mathrm{RW}}^{(r, r)} = 1 + t_{\mathrm{cov}}(G)$.

    We first observe that $G$ possesses the following symmetry: given any two pairs of super-coupons $u, v$ and $u', v'$ such that $|u\cap v| = |u'\cap v'|$, there is an automorphism which sends $u$ and $v$ to $u'$ and $v'$ respectively. Thus the hitting time $H(u, v)$ is only dependent on $|u \cap v|$. Let $h_k$ denote the hitting time $H(u, v)$ for some $u, v$ such that $|u \cap v| = k$.

We now observe that if $|u \cap v| = k$ and $w$ is a neighbour of $u$, i.e. $|u \cap w| = r - 1$, then $|w \cap v|$ can take at most three values, namely $k - 1$, $k$, or $k + 1$. Indeed,
\[
    |w \cap v| = |u \cap w \cap v| + |u^c \cap w \cap v| \le |u \cap w \cap v| + |u^c \cap w| \le k + 1,
\]
and
\[
    |w \cap v| \ge |u \cap w \cap v| = |u \cap v| - |u \cap v \cap w^c| \ge |u \cap v| - |u \cap w^c| \ge k - 1.
\]
    
    The only way to construct a neighbour $w$ of $u$ such that $|w \cap v| = k - 1$ is to delete one coupon from $u$ which is in $u \cap v$ and add a coupon not in $u \cup v$. Thus there are $k (n - 2r + k)$ many such neighbours $w$ of $u$.

    On the other hand, the only way to construct a neighbour $w$ of $u$ such that $|w \cap v| = k + 1$ is to delete one coupon from $u$ which is not in $u \cap v$ and add a coupon from $v$ which is not in $u \cap v$. Thus there are $(r - k)^2$ many such neighbours $w$ of $u$.

    All the other neighbours $w$ of $u$ satisfy $|w \cap v| = k$.

Therefore from \eqref{eq:hittm}, we get that for $0 \le k \le r - 1$,
\begin{equation}\label{eq:rec-h_k}
    h_k = 1 + \frac{k(n-2r+k)}{r(n-r)} h_{k-1} + \frac{(r-k)^2}{r(n-r)} h_{k+1} + \bigg[1 - \frac{k(n-2r+k) + (r-k)^2}{r(n-r)}\bigg]h_k,
\end{equation}
where we use the convention that $h_{-1} = h_r = 0$. Defining $x_k := h_k - h_{k - 1}$, $0 \le k \le r - 1$, we obtain the following recursion from \eqref{eq:rec-h_k}:
\[
    k(n -2r + k) x_k = r(n - r) + (r - k)^2 x_{k + 1},
\]
which yields
\begin{equation}\label{eq:rec-x_k}
    x_{k + 1} = \bigg[1 + \frac{nk - r^2}{(r - k)^2}\bigg] x_k - \frac{r(n - r)}{(r - k)^2}.
\end{equation}
Putting $k = 0$ in  \eqref{eq:rec-x_k}, we get
\begin{equation}
    x_1 = -\frac{r(n-r)}{r^2} = -\frac{n}{r}(1+o(1)).
\end{equation}
An induction on $k$ now gives that for $0 \le k \le r - 1$,
\begin{equation}\label{eq:x-sol}
    x_{k + 1} = - \frac{r k!n^{k + 1}}{\prod_{i = 0}^k (r - i)^2} (1 + o(1)).
\end{equation}
In particular, for $k = r - 1$, we get
\[
    x_r = -\frac{r (r-1)! n^r}{(r!)^2} (1+o(1)) = -\frac{n^r}{r!} (1+o(1)).
\]
As $h_r = 0$ and $x_r = h_r - h_{r-1}$, we get that
\[
    h_{r - 1} = \frac{n^r}{r!}(1 + o(1)).
\]
Finally, we note that \eqref{eq:x-sol} implies that for all $1 \le k \le r - 1$, $x_k = o(n^r)$. Then, since
\[
    h_k = h_{r - 1} - x_{r - 1} - x_{r - 2} - \cdots - x_{k + 1},
\]
we have that for all $0 \le k \le r - 1$,
\[
    h_{k} = \frac{n^r}{r!} (1 + o(1)).
\]
Matthews' theorem now implies that the cover time is $\frac{n^r}{r!} \log \binom{n}{r} (1 + o(1)) = \frac{n^r}{(r - 1)!} \log n (1 + o(1))$. This completes the proof.
\end{proof}

\section*{Funding}
The research of SA is partially supported by the CPDA grant from Indian Statistical Institute and the Knowledge Exchange grant from ICTS-TIFR. The research of SM is supported by the German Research Foundation through the grant DFG-ANR PRCI ``ASCAI'' (GH 257/3-1). The research of SSM is partially supported by the INSPIRE research grant DST/INSPIRE/04/2018/002193 and the CPDA grant from Indian Statistical Institute.

\section*{Acknowledgements}
The authors thank Partha Dey and Svante Janson for their helpful comments.

\newpage

\bibliographystyle{alpha-abbrv}
\bibliography{ref}

\end{document}